\newtheorem{theorem}{Theorem}[section]
\newtheorem{lemma}[theorem]{Lemma}
\theoremstyle{definition}
\newtheorem{definition}[theorem]{Definition}
\newtheorem{prop}[theorem]{Proposition}
\newtheorem{remark}[theorem]{Remark}
\renewcommand{\d}{\mbox{\rm d}}
\newcommand{\Sp}{\mbox{\rm Sp}}
\newcommand{\Tr}{\mbox{\rm Tr}}
\numberwithin{equation}{section}
\begin{document}
\pagestyle{headings}
\title{Differential operators for Siegel-Jacobi forms}

\author{Jiong Yang and Linsheng Yin}
\address{Department of Mathematical Science, Tsinghua University, Beijing, P. R. China 100084}
\email{yangjiong10@mails.tsinghua.edu.cn, lsyin@math.tsinghua.edu.cn.}
\thanks{The authors were supported  by NSFC NO.11271212.}


\subjclass[2010]{Primary  11F60; Secondary 11F50}


\keywords{connection, Jacobi form, invariant differential operator}

\begin{abstract}
For any positive integers $n$ and $m$, $\mathbb{H}_{n,m}:=\mathbb{H}_n\times\mathbb{C}^{(m,n)}$ is called the Siegel-Jacobi space,
with the Jacobi group acting on it. The Jacobi forms are defined on this space. In this article we compute the Chern
 connection of the Siegel-Jacobi space and use it to obtain derivations of Jacobi forms. Using these results, we constructed a series of invariant differential operators for Siegel-Jacobi forms. Also two kinds of Maass-Shimura type differential operators for $\mathbb{H}_{n,m}$ are obtained.
\end{abstract}

\maketitle
\section{Introduction}

We first recall the concept of Jacobi forms and their differential operators. For given fixed positive integers $n,m$, let $\mathbb{H}_n:=\{Z\in M_{n,n}(\mathbb{C})|Z=Z^t, Im(Z)>0\}$ be the Siegel upper half plane of degree $n$, and $\mathbb{H}_{n,m}:=\mathbb{H}_n\times\mathbb{C}^{(m,n)}$, the Siegel-Jacobi space. An element of $\mathbb{H}_{n,m}$ can be written as $(Z,W)$ with $Z=Z^t=(z_{ij})\in M_{n,n},W=(w_{rs})\in M_{m,n}. $ Let $Y$ and $V$ be the imaginary part of the matrix $Z$ and $W$ respectively.

Let $\Sp(n,\mathbb{R})$ be the symplectic group of degree $n$. The Heisenberg group $H_\mathbb{R}^{(n,m)}$ is defined to be the set

\[H_\mathbb{R}^{(n,m)}:=\left\{(\lambda,\mu;\kappa)\  | \ \lambda,\mu\in\mathbb{R}^{(m,n)},\kappa\in\mathbb{R}^{(m,m)},\kappa+\mu\lambda^t \ \text{symmetric}\right\}.\]

 endowed with the multiplication law:

\[(\lambda,\mu;\kappa)\circ(\lambda',\mu';\kappa')=(\lambda+\lambda',\mu+\mu',\kappa+\kappa'+\lambda^t\mu'-\mu^t\lambda').\]

The Jacobi group of degree $n$, index $m$ is defined to be $G^J:=\Sp(n,\mathbb{R})\ltimes H_\mathbb{R}^{(n,m)}$, endowed with the following multiplication law
\[\left(\gamma,(\lambda,\mu;\kappa)\right)\cdot\left(\gamma',(\lambda',\mu';\kappa')\right)
=\left(\gamma \gamma',(\widetilde{\lambda}+\lambda',\widetilde{\mu}+\mu';\kappa+\kappa'+\widetilde{\lambda}\mu'^t-\widetilde{\mu}\lambda'^t)\right)\]

where $\gamma,\gamma'\in \Sp(n,\mathbb{R});(\lambda,\mu;\kappa),(\lambda',\mu';\kappa')\in H_{\mathbb{R}}^{(n,m)}$and $(\widetilde{\lambda},\widetilde{\mu})=(\lambda,\mu)\Gamma'$. The Jacobi group $G^J$ acts canonically on $\mathbb{H}_{n,m}$ by
\begin{eqnarray}\label{equation:action}
\left(\gamma,(\lambda,\mu;\kappa)\right)\cdot(Z,W)=\left(\gamma\cdot Z,(W+\lambda Z+\mu)(CZ+D)^{-1}\right),
\end{eqnarray}
where $\gamma\cdot Z$ means the usual action of an element $\gamma=\left(
     \begin{array}{cc}
       A & B \\
       C & D \\
     \end{array}
   \right)\in\Sp(n,\mathbb{R})$ on the Siegel upper half plane by $\gamma\cdot Z=(AZ+B)(CZ+D)^{-1}$. See for example \cite{Maass} for more details.

To define Jacobi forms, we need some discrete subgroups of $G^J$. Denote the discrete subgroup $\Sp(n,\mathbb{Z})\ltimes \mathbb{Z}^2$ of $G^J$ by $\Gamma^J$. We can also consider more general discrete subgroups, but now we concentrate on the $\Sp(n,\mathbb{Z})\ltimes \mathbb{Z}^2$ case.

We give the following formal definition of Jacobi forms of general degree.
\begin{definition}
Let $M$ be a positive definite half integer $m\times m$ matrix, a ( holomorphic ) Jacobi form $f$ of weight $k$ and index $M$, is a ( holomorphic ) function on $\mathbb{H}_{n,m}$, which satisfies the translation law of :
\begin{equation}
f(g(Z,W))=\det(CZ+D)^k\cdot e^{2\pi \sqrt{-1} \cdot\Tr\left\{(MW(CZ+D)^{-1}CW^t-M(\lambda Z\lambda^t+2\lambda W^t-\mu\lambda^t))\right\}}\cdot f(Z,W),
\end{equation}
for $g=\left(\left(
     \begin{array}{cc}
       A & B \\
       C & D \\
     \end{array}
   \right),(\lambda,\mu,\kappa)\right)\in \Gamma^J$.
\end{definition}

This is the definition of Ziegler in\cite{Ziegler}. In this definition of Jacobi forms, we did not ask for any growth conditions, such as assuming they are holomorphic or eigenfunctions of some differential operator. Since our purpose is concentrated on differential operators, although these conditions are needed in practise, now we do not assume this. The set of Jacobi forms of weight $k$ and index $M$ is denoted by $J_{k,M}$, and the subset of holomorphic Jacobi forms are denoted by $J_{k,M}^{hol}$.

Jacobi forms of degree 1 are introduced and studied by Eicher and Zagier in \cite{Zagier}. They are related to both modular forms and degree 2 Siegel modular forms and are useful in many aspects of number theory such as lifting problems. The non-holomorphic differential operators on Jacobi forms are well studied in \cite{Berndt Schmidt}. There are several important differential operators for Jacobi forms of degree 1, including two raising operators and two lowering operators,  which are
\begin{eqnarray*}
\nonumber Y_+f&=&  \sqrt{-1}\left(\frac{\partial f}{\partial w}+\frac{v}{y}4\pi \sqrt{-1}Mf\right),\\
\nonumber Y_-f&=&  y\frac{\partial f}{\partial \overline{w}},\\
\nonumber X_+f&=& \sqrt{-1} \left(2\frac{\partial f}{\partial z}+2\frac{v}{y}\frac{\partial f}{\partial w}+4\pi\sqrt{-1}M\frac{v^2}{y^2}f+\frac{k}{y}f\right), \\
\nonumber X_-f&=&  2\sqrt{-1}y\left(y\frac{\partial f}{\partial \overline{z}}+v\frac{\partial f}{\partial \overline{w}}\right).
\end{eqnarray*}

Here $(z,w)$ is the local coordinate of $\mathbb{H}_{1,1}$, and $y$ and $v$ are their imaginary part respectively. Also there is a non-holomorphic heat operator which is related to the operator $\frac{\partial}{\partial z}-\frac{\sqrt{-1}k}{y}f$ in the correspondence of Jacobi forms and half integer modular forms. The heat operator is defined as (denoted as $D_+$ in \cite{Berndt Schmidt})
\[Lf:=8\pi M\sqrt{-1}\frac{\partial f}{\partial z}-\frac{\partial^2f}{\partial w^2}-\frac{2M\pi-4M\pi k}{y}f.\]
These operators can help us understand Jacobi forms better and we generalize them to higher degree cases in section \ref{section:differential}. Although they are not linear, we will see later that this will help us construct invariant differential operators and Maass-Shimura operators.

 We will consider their generalizations to general degree.
In the paper \cite{Yang Yin}, Yang and Yin investigated  the derivative operators of Siegel modular forms. Similar to this work, we get the following differential operators.

\begin{theorem}[See Theorem \ref{theorem:R}]\label{theorem:introR}
For any $f\in J_{k,M}$, we have
\begin{itemize}
    \item[(a)] If $n=m$,  $R_1(f)=\det\left(\frac{\partial f}{\partial W}+4\pi \sqrt{-1}(Y^{-1}V^tM)f\right)\in J_{nk+1,nM}$;
  \item[(b)]If $n=m$,  $L_1(f)=\det\left(\frac{\partial f}{\partial \overline{W}}Y\right)$ is in $J_{nk-1,nM}$.
   \item[(c)]$R_2(f)=\det\left(\frac{\partial f}{\partial Z}-\frac{\sqrt{-1}k}{2}fY^{-1}+2\pi\sqrt{-1}Y^{-1}V^tMVY^{-1}f+\frac{1}{2}\frac{\partial f}{\partial W}VY^{-1}+\frac{1}{2}Y^{-1}V^t\frac{\partial f}{\partial W}^t\right)\in J_{nk+2,nM}$
   \item[(d)]$L_2(f)=\det\left(\frac{\partial f}{\partial \bar{Z}}Y^2+\frac{1}{2}\frac{\partial f}{\partial \overline{W}}VY+\frac{1}{2}YV^t\frac{\partial f}{\partial \overline{W}}^t\right)\in J_{nk-2,nM}$
\end{itemize}
\end{theorem}
They are generalizations of $Y_+,Y_-,X_+$ and $X_-$ respectively. The proof of this theorem is contained in section \ref{section:differential1} and \ref{section:differential2}, using the following Theorem \ref{theorem:introconnection} of the Chern connections on the Siegel-Jacobi space. The authors are grateful to the reviewers for the advise of using Chern connections instead of the Levi-Civita connection in the following theorem.

\begin{theorem}[See Theorem \ref{theorem:connection}]\label{theorem:introconnection}
Let $\mathbb{D}$ be the Chern connection on the Hermitian manifold $\mathbb{H}_{n,m}$ associated to the invariant metric, then $\mathbb{D}$ satisfies
\begin{eqnarray}
   \mathbb{D}(\d Z)&=&-\frac{\sqrt{-1}B}{2A}(\d Z,\d W^t)\left(
     \begin{array}{cc}
      2\frac{A}{B}Y^{-1}+Y^{-1}V^tVY^{-1} & -Y^{-1}V^t \\
       -VY^{-1} & I\\
     \end{array}
   \right)\left(
     \begin{array}{c}
       \d Z  \\
       \d W \\
     \end{array}
   \right) \label{eq:Ddz} \\
\nonumber   \mathbb{D}(\d W)&=& -\frac{\sqrt{-1}B}{2A}VY^{-1}(\d Z,\d W^t)\left(
     \begin{array}{cc}
       Y^{-1}V^tVY^{-1} & -Y^{-1}V^t \\
      -VY^{-1} & I \\
     \end{array}
   \right)\left(
     \begin{array}{c}
       \d Z  \\
       \d W  \\
     \end{array}
   \right) \\
   &&\qquad\qquad-  \sqrt{-1}dWY^{-1}\d Z \label{eq:DdW}
\end{eqnarray}
\end{theorem}
We apply this theorem to invariant sections for Jacobi forms in section \ref{section:differential}. For details of this theorem, see section \ref{section:connection}.And the proof of this theorem is contained in section \ref{section:computation}.

Besides these, by considering the translation formula of the operators we obtained. We find a series of invariant differential operators for the Siegel-Jacobi forms of general degree. For Siegel modular space, Maass obtained the invariant differential operators in \cite{Maass}. Similar to his result, we have

\begin{theorem}[See Theorem \ref{theorem:invariant}]\label{theorem:introinvariant}
The operator matrix $Y_-Y_+$ is an invariant differential operator matrix on $\mathbb{H}_{n,m}$, thus each of the $(k,l)$ entries
of this matrix is an invariant differential operator on $\mathbb{H}_{n,m}$. And the operators $H^j,T_{k,l}^j,U_{k,l},V_{k,l}$ are all invariant differential
operators.
\end{theorem}

The notations in this Theorem are explained in section \ref{section:invariant}. We applied Maass's method to prove this theorem.

Another main result of this paper is the construction of two Maass-Shimura type differential operators.
In the Siegel case,  Maass and Shimura developed a differential operator which transforms a weight $k$ Siegel modular form to a weight $k+2$ non-holomorphic Siegel modular form. This operator is very useful in Shimura's theory of nearly holomorphic Siegel modular forms. As for Siegel-Jacobi form case, we construct two such operators in section \ref{section:shimura}.
\begin{theorem}[See Theorem \ref{theorem:h}, \ref{theorem:L}]
The operators
\begin{eqnarray*}
  H_{k,M}:&=& \det(Y)^{\kappa-k-1}\exp\left\{4\pi \Tr(MVY^{-1}V^t)\right\}\det\left(X_+\right)\left(\det(Y)^{k+1-\kappa}\exp\left\{-4\pi \Tr(MVY^{-1}V^t)\right\}\right), \\
  L_{k,M}:&=& \det(Y)^{\kappa'-k-1}\exp\left\{-4\pi \Tr(MVY^{-1}V^t)\right\}\det\left(-8\pi\sqrt{-1}\frac{\partial }{\partial Z}+\frac{\partial}{\partial W}M^{-1}(\frac{\partial }{\partial W})^t\right)\\
  & &(\det(Y)^{k+1-\kappa'}\exp\left\{-4\pi \Tr(MVY^{-1}V^t)\right\})
\end{eqnarray*}
 both map from $J_{k,M}$ to $J_{k,M+2}$. Here $\kappa=\frac{n+1}{2},\kappa'=\frac{n+m+1}{2}$.
\end{theorem}
In degree 1 case, the two operators become the operator $X_+$ and the heat operator respectively. They may be used to research the nearly holomorphic theory for Jacobi forms.

Differential operators play an important role in the theory of automorphic forms. They are closely related to L-functions. There are many results on differential operators for various of automorphic forms. For the case of  Siegel modular forms, holomorphic differential operators for Siegel modular forms and other automorphic forms are well studied by T.Ibukiyama in \cite{Ibukiyama}. In \cite{Bocherer}, S.B\"ocherer and S.Nagaoka used mod p differential operators for Siegel modular forms to study mod p properties. See also \cite{Bocherer2,Ibukiyama2} for more work in these areas.

As for Jacobi forms on $\mathbb{H}\times\mathbb{C}^m$, linear differential operators are given by Conley and Raum in \cite{Conley}. In \cite{Yang 2}, some invariant differential operators on the Siegel-Jacobi space are defined by J.-H Yang. Also Y.Choie and W.Eholzer studied the Rankin-Cohen brackets for Jacobi forms in \cite{Choie E}. In degree 1 case, S. B\"ocherer obtained the Rankin-Cohen brackets by Maass-Shimura type operators in \cite{Bocherer3}. We hope that similar results can be obtained by our Maass-Shimura type operator in general degree case. The first author thanks Professor T.Ibukiyama very much for pointing out this.

This paper is organized as follows.

In section \ref{section:connection}, we recall the concept of  connection and the metric in the Siegel-Jacobi space case, which will be used in our proof. The Chern connection $\mathbb{D}$ on $\mathbb{H}_{n,m}$ is given in Theorem \ref{theorem:connection}. The explicit proof is quite complicated and will be given in section \ref{section:computation}.
In section \ref{section:differential}, we consider the higher degree cases. Instead of the classical Hagson's method, we will use connections computed above to obtain differentials of Jacobi forms. First of all we get some operators in the determinant form, including both raising and lowering  operators, generalizing exactly the classical case. See Theorem \ref{theorem:R}.
Then using these results and Maass's work on Siegel modular forms, we obtain a series of invariant differential operators for Jacobi forms in section \ref{section:invariant}. These operators are summarized in  Theorem \ref{theorem:invariant}.
 In the last part of Section \ref{section:differential}, we define two analogues of the Maass-Shimura operator in the Siegel-Jacobi case, denoted by $H_{k,M}$ and $L_{k,M}$, which may be used in nearly holomorphic Jacobi forms theories.
See Theorem \ref{theorem:h} and \ref{theorem:L} and their proof.
Section \ref{section:computation} contains the explicit proof of Theorem \ref{theorem:introconnection}.

\section{Levi-Civita Connections of Siegel-Jacobi Space}\label{section:connection}
We recall some basic facts of  Riemannian  Geometry and K\"{a}hler geometry from \cite{Chern} and \cite{Kobayashi}. Suppose that $M$ is a smooth manifold, E is a vector bundle on $M$ and $\Gamma(E)$  is the set of global sections. A connection on $E$ is a map
\[\mathbb{D}:\Gamma(E)\rightarrow \Gamma(T(M^*)\otimes E)\]
satisfying the following two conditions:

(1) For any sections $s_1,s_2\in \Gamma(E)$,
\[\mathbb{D}(s_1+s_2)=\mathbb{D}(s_1)+\mathbb{D}(s_2).\]

(2) For any section $s\in\Gamma(E)$ and any $\alpha\in \mathcal{C}^\infty(M),$
\[\mathbb{D}(\alpha s)=d\alpha\otimes s+\alpha \mathbb{D}(s).\]

If M is a Riemannian manifold with a Riemannian metric $g=\sum\limits_{i,j}g_{ij}du^idu^j$, then there exists a unique torsion-free and metric-compatible connection, called the Levi-Civita connection whose Christoffel coefficients $\Gamma_{ij}^k$ satisfies
\begin{equation}\label{equation:oldgamma}
\Gamma_{ij}^k=\sum\frac{1}{2}g^{kl}\left(\frac{\partial g_{il}}{\partial u^j}+\frac{\partial g_{jl}}{\partial u^i}-\frac{\partial g_{ij}}{\partial u^l}\right),
\end{equation}
where $g^{ij}$ means the $i,j$ factor of the inverse matric of $(g_{ij})$. The connection matrix is defined to be
 $\omega=\left(\omega_i^j\right)$, where $ \omega_i^j=\sum\limits_k \Gamma_{ik}^jdu^k$.

Now suppose that $M$ is a Hermitian manifold. A Hermitian manifold means a complex manifold with a Hermitian metric on its holomorphic tangent space. Suppose $z_1,...z_n$ is a complex local coordinate system in $M$. Then the Hermitian metric is given by the form
\[h=\sum\limits_{\alpha,\beta}h_{\alpha\bar{\beta}}dz_{\alpha}dz_{\bar{\beta}}.\]
where $h_{\alpha\bar{\beta}}$ are the components of a positive-definite Hermitian matrix. The fundamental 2-form is given by
\[\Phi=\frac{i}{2}\sum\limits_{\alpha,\beta}h_{\alpha\bar{\beta}}dz_{\alpha}dz_{\bar{\beta}}\]
The manifold $M$ is K\"{a}hler if and only if the 2-form $\Phi$ is closed.

$M$ is also equipped with a unique torsion free connection, which is called the Chern connection. Its Christoffel coefficients satisfies
\[\Gamma^{\alpha}_{\beta\gamma}=\Gamma^{\alpha}_{\gamma\beta},\Gamma^{\overline{\alpha}}_{\overline{\beta}\overline{\gamma}}
=\Gamma^{\overline{\alpha}}_{\overline{\gamma}\overline{\beta}}\]
And others are all 0.

The Christoffel coefficients of Chern connections can be computed as
\begin{equation}\label{equation:gamma}
\Gamma^{\alpha}_{\gamma\beta}=\sum\limits_{i}h^{\alpha\bar{i}}\frac{\partial h_{\beta\bar{i}}}{\partial z_{\gamma}}
\end{equation}
where $(h^{\alpha\bar{i}})$ means the inverse matrix of the metric matrix of h.

The Hermitian metric naturally defines a Riemannian metric. If the manifold is K\"{a}hler, the Chern connection can induce a connection on the corresponding Riemannian manifold, and that is just the Levi-Civita connection.

Actually, the connection is divided into two parts: the holomorphic part $\mathbb{D}^{1,0}$ and the non-holomorphic part $\mathbb{D}^{0,1}$.  In general, the holomorphic and non-holomorphic part are conjugate to each other, and for simplicity, we will only consider the holomorphic part, and still write it as $\mathbb{D}$.

 The Chern connection of the Siegel-Jacobi space is given in Theorem \ref{theorem:connection}, whose proof is given in section \ref{section:computation}.

We first need to know the invariant metric of the Siegel-Jacobi space, in \cite{Yang}, J.-H Yang proved the following theorem:
\begin{theorem}\label{theorem:metric}
For any two positive real numbers $A$ and $B$, the following metric
\begin{displaymath}
\begin{aligned}
ds _{n,m;A,B}^2= & A\cdot \Tr(Y^{-1}dZY^{-1}d\overline{Z})\\
&+B\left\{\Tr(Y^{-1}V^tVY^{-1}dZY^{-1}d\overline{Z})+\Tr(Y^{-1}(dW)^td\overline{W})\right\}\\
&-B\left\{\Tr(VY^{-1}dZY^{-1}(d\overline{W}^t))+\Tr(VY^{-1}d\overline{Z}Y^{-1}(dW)^t)\right\}
\end{aligned}
\end{displaymath}
is a Riemannian metric on $\mathbb{H}_{n,m}$, which is invariant under the action of the Jacobi group $G^J$.
\end{theorem}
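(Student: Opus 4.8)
The plan is to verify the two defining properties of a $G^J$-invariant Riemannian metric separately: that $ds^2_{n,m;A,B}$ is positive definite, and that it is preserved by the $G^J$-action. Throughout write $Y=\mathrm{Im}(Z)$, $V=\mathrm{Im}(W)$, and $J=CZ+D$, $\bar J=C\bar Z+D$ for the automorphy factors. The first observation I would exploit is a repackaging of the $B$-block: introducing the matrix-valued $1$-form $\omega:=dW-VY^{-1}dZ$ and using the cyclicity of the trace together with the symmetry of $Y$ and $Z$ to recombine the two cross terms, the metric becomes
\begin{equation*}
ds^2_{n,m;A,B}=A\,\Tr(Y^{-1}dZ\,Y^{-1}d\overline{Z})+B\,\Tr(Y^{-1}\omega^t\overline{\omega}).
\end{equation*}
This form is the natural target for all subsequent computations.

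Since the pullback of a tensor is functorial, it suffices to check invariance on a generating set of $G^J$, and I would split $G^J=\Sp(n,\mathbb{R})\ltimes H^{(n,m)}_{\mathbb{R}}$ accordingly. For the Heisenberg part $(\lambda,\mu;\kappa)$ one has $Z^*=Z$, $Y^*=Y$, $W^*=W+\lambda Z+\mu$, hence $V^*=V+\lambda Y$ and $dW^*=dW+\lambda\,dZ$; a one-line computation gives $\omega^*=(dW+\lambda\,dZ)-(V+\lambda Y)Y^{-1}dZ=\omega$, so both blocks above are manifestly unchanged. For the symplectic part I would reduce $\Sp(n,\mathbb{R})$ to its standard generators — the translations $Z\mapsto Z+S$ with $S={}^tS$, the $\GL_n$-twists, and the inversion $Z\mapsto -Z^{-1}$ — and record the classical automorphy laws $Y^*={}^t\bar J^{-1}YJ^{-1}$ and $dZ^*={}^tJ^{-1}dZ\,J^{-1}$, together with their conjugates. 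The $A$-block is then exactly the Siegel metric on $\mathbb{H}_n$, whose $\Sp$-invariance is classical.

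The heart of the argument, and the step I expect to be the main obstacle, is the $\Sp$-invariance of the $B$-block. Here $W^*=WJ^{-1}$ forces $dW^*=(dW-WJ^{-1}C\,dZ)J^{-1}$ and, writing $W=U+iV$, one finds $V^*=-U\bar J^{-1}CYJ^{-1}+\tfrac12 V(J^{-1}+\bar J^{-1})$, so that $\omega^*$ acquires terms depending on the real part $U=\mathrm{Re}(W)$ even though $\omega$ itself does not. I would substitute these into $\Tr(Y^{*-1}(\omega^*)^t\overline{\omega^*})$ and show, using the symplectic relations (such as $C{}^tD=D{}^tC$ and ${}^tAC={}^tCA$) and the symmetry of $Y$ and $Z$, that all $U$-dependent contributions cancel and the trace collapses back to $\Tr(Y^{-1}\omega^t\overline{\omega})$; carrying out this cancellation cleanly for the inversion generator is where the bookkeeping is most delicate. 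Finally, positive definiteness follows from invariance together with the transitivity of $G^J$ on $\mathbb{H}_{n,m}$: it is enough to evaluate $ds^2_{n,m;A,B}$ at the base point $(iI_n,0)$, where $Y=I_n$ and $V=0$ and the form reduces to $A\,\Tr(dZ\,d\overline{Z})+B\,\Tr((dW)^t\,d\overline{W})=A\sum|dz_{ij}|^2+B\sum|dw_{rs}|^2$, which is evidently positive definite, and invariance then propagates positivity to every point of $\mathbb{H}_{n,m}$.
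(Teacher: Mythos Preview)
The paper does not give its own proof of this theorem; it is quoted verbatim as a result of J.-H.~Yang \cite{Yang}, so there is nothing in the paper to compare your argument against.

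Your plan is sound, and the repackaging $ds^2=A\,\Tr(Y^{-1}dZY^{-1}d\overline Z)+B\,\Tr(Y^{-1}\omega^t\overline\omega)$ with $\omega=dW-VY^{-1}dZ$ is exactly the right move. One remark: the step you flag as the main obstacle --- the symplectic invariance of the $B$-block and the feared cancellation of $U$-dependent terms --- is in fact much cleaner than your outline suggests. Rather than expanding $V^*$ and $\omega^*$ separately and chasing cancellations, first observe (using the symplectic identity $J^{-1}C=C^tJ^{-t}$) that $(Y^*)^{-1}dZ^*=\bar J\,Y^{-1}dZ\,J^{-1}$ and that $V^*\bar J=V-WJ^{-1}CY$; substituting these gives directly
\[
\omega^*=dW^*-V^*(Y^*)^{-1}dZ^*=\bigl(dW-VY^{-1}dZ\bigr)J^{-1}=\omega\,J^{-1},
\]
with the $WJ^{-1}C\,dZ\,J^{-1}$ contributions from $dW^*$ and from $V^*(Y^*)^{-1}dZ^*$ cancelling on the nose. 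Then $\Tr\bigl((Y^*)^{-1}(\omega^*)^t\overline{\omega^*}\bigr)=\Tr\bigl(\bar J Y^{-1}J^t\cdot J^{-t}\omega^t\cdot\overline\omega\,\bar J^{-1}\bigr)=\Tr(Y^{-1}\omega^t\overline\omega)$ by cyclicity. So no $U$-terms ever appear and there is no delicate bookkeeping; you can handle the full symplectic group at once rather than generator by generator. Your positivity argument via transitivity and evaluation at $(iI_n,0)$ is fine.
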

The symbols $Y,V$ denotes the imaginary parts of $Z$ and $W$ respectively, and
\[dZ=\left(
     \begin{array}{cccc}
       dz_{1,1} & dz_{1,2} &\ldots & dz_{1,n} \\
       dz_{2,1} & \ddots & & \vdots\\
       \vdots & & \ddots & \\
       dz_{n,1} &\ldots & & dz_{n,n} \\
     \end{array}
   \right),
   \
   dW=\left(
     \begin{array}{cccc}
       dw_{1,1} & dw_{1,2} &\ldots & dw_{1,n} \\
       dw_{2,1} & \ddots & & \vdots\\
       \vdots & & \ddots & \\
       dw_{m,1} &\ldots & & dw_{m,n} \\
     \end{array}
   \right).
\]

In fact, this metric is  K\"{a}hler and we can compute its Chern connection.
The Chern connection associated to this metric $ds _{n,m;A,B}^2$, denoted by $\mathbb{D}$, is invariant under the action of $G^J$, i.e. $g\mathbb{D}=\mathbb{D}g$ for any $g\in G^J$. Especially, $\mathbb{D}$ maps invariant sections to invariant sections, and we can use this fact to get differential operators for Jacobi forms as the Siegel modular case given in \cite{Yang Yin}.

We first look at the case when $n=m=1$. In this case, the invariant metric becomes
\[ds^2_{A,B}=Ay^{-2}dzd\bar{z}+B\left(v^2y^{-3}dzd\bar{z}+y^{-1}dwd\bar{w}-vy^{-2}dzd\bar{w}-vy^{-2}dwd\bar{z}\right).\]

So the metric matrix is written as
\[h=\left(
     \begin{array}{cc}
       \frac{A}{y^2}+\frac{Bv^2}{y^3} &  -\frac{Bv}{y^2} \\
       -\frac{Bv}{y^2} & \frac{B}{y} \\
          \end{array}
   \right),\]

and

\[h^{-1}=\left(
     \begin{array}{cccc}
        \frac{y^2}{A}  & \frac{vy}{A} \\
       \frac{vy}{A}  & \frac{y}{B}+\frac{v^2}{A} \\
     \end{array}
   \right).\]

By the formula (\ref{equation:gamma}) , we can get the connection matrix $w$ with $w_{i}^j=\sum\limits_k\Gamma_{ik}^jdu^k$

\[
w=\frac{B\sqrt{-1}}{2A}
\left( \begin{smallmatrix}
(\frac{2A}{By}+\frac{v^2}{y^2})dz-\frac{v}{y}dw &  \frac{v^3}{y^3}dz+(-\frac{v^2}{y^2}+\frac{A}{By})dw  \\
             -\frac{v}{y}dz+dw & (-\frac{v^2}{y^2}+\frac{A}{By})dz+\frac{v}{y}dw \\
      \end{smallmatrix} \right).
\]

Thus we get
\begin{displaymath}
\begin{aligned}
 \mathbb{D}(dz)&=-\left(\frac{\sqrt{-1}}{y}+\frac{\sqrt{-1}Bv^2}{2Ay^2}\right)dz^2+\frac{\sqrt{-1}Bv}{Ay}dzdw-\frac{\sqrt{-1}B}{2A}dw^2\\
&=(dz,dw)\left(
     \begin{array}{cc}
       -\frac{\sqrt{-1}}{y}-\frac{\sqrt{-1}Bv^2}{2Ay^2} & \frac{\sqrt{-1}Bv}{2Ay} \\
       \frac{\sqrt{-1}Bv}{2Ay} & -\frac{\sqrt{-1}B}{2A} \\
     \end{array}
   \right)\left(
     \begin{array}{c}
       dz  \\
       dw  \\
     \end{array}
   \right),\\
  \mathbb{D}(dw)&=-\frac{\sqrt{-1}Bv^3}{2Ay^3}dz^2-\left(\frac{-\sqrt{-1}Bv^2}{Ay^2}+\frac{\sqrt{-1}}{y}\right)dzdw-\frac{\sqrt{-1}Bv}{2Ay}dw^2\\
&=(dz,dw)\left(
     \begin{array}{cc}
       -\frac{\sqrt{-1}Bv^3}{2Ay^3} & -\frac{-\sqrt{-1}Bv^2}{2Ay^2}-\frac{\sqrt{-1}}{2y} \\
      -\frac{-\sqrt{-1}Bv^2}{2Ay^2}-\frac{\sqrt{-1}}{2y} & -\frac{\sqrt{-1}Bv}{2Ay} \\
     \end{array}
   \right)\left(
     \begin{array}{c}
       dz  \\
       dw  \\
     \end{array}
   \right)\\
&=(dz,dw)\left(
     \begin{array}{cc}
       -\frac{\sqrt{-1}Bv^3}{2Ay^3} & -\frac{-\sqrt{-1}Bv^2}{2Ay^2} \\
      -\frac{-\sqrt{-1}Bv^2}{2Ay^2} & -\frac{\sqrt{-1}Bv}{2Ay} \\
     \end{array}
   \right)\left(
     \begin{array}{c}
       dz  \\
       dw  \\
     \end{array}
   \right)-\frac{\sqrt{-1}}{y}dzdw.\\
   \end{aligned}
\end{displaymath}
For higher degree cases, we have the following results:
\begin{theorem}\label{theorem:connection}
Let $\mathbb{D}$ be the Chern connection on the manifold $\mathbb{H}_{n,m}$ associated to the invariant metric in Theorem \ref{theorem:metric}, then $\mathbb{D}$ satisfies
\begin{eqnarray}
   \mathbb{D}(\d Z)&=&-\frac{\sqrt{-1}B}{2A}(\d Z,\d W^t)\left(
     \begin{array}{cc}
      2\frac{A}{B}Y^{-1}+Y^{-1}V^tVY^{-1} & -Y^{-1}V^t \\
       -VY^{-1} & I\\
     \end{array}
   \right)\left(
     \begin{array}{c}
       \d Z  \\
       \d W \\
     \end{array}
   \right) \label{eq:Ddz} \\
\nonumber   \mathbb{D}(\d W)&=& -\frac{\sqrt{-1}B}{2A}VY^{-1}(\d Z,\d W^t)\left(
     \begin{array}{cc}
       Y^{-1}V^tVY^{-1} & -Y^{-1}V^t \\
      -VY^{-1} & I \\
     \end{array}
   \right)\left(
     \begin{array}{c}
       \d Z  \\
       \d W  \\
     \end{array}
   \right) \\
   &&\qquad\qquad- \sqrt{-1}\d WY^{-1}\d Z \label{eq:DdW}
\end{eqnarray}

   where $\mathbb{D}(\d Z)$ means $\left(
     \begin{array}{cccc}
       \mathbb{D}(dz_{11}) & \mathbb{D}(dz_{12}) &\ldots &\mathbb{D}(dz_{1n}) \\
      \mathbb{D}(dz_{21}) & \ddots & &\vdots \\
     \vdots & & \ddots &\\
     \mathbb{D}(dz_{n1})&\ldots& &\mathbb{D}(dz_{nn})\\
     \end{array}
   \right)$ and $\mathbb{D}(dW)$ is similar.
\end{theorem}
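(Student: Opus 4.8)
The plan is to read off the metric coefficients from Theorem 2.1 in matrix form and feed them into the Christoffel formula (2.1), arranging the index sums so that the block structure of the final answer survives rather than being buried in scalar components. First I would work in the complex coordinates $z_{ij}$ ($1\le i\le j\le n$, since $Z=Z^t$) and $w_{rs}$, together with their conjugates, and identify the Hermitian coefficient supermatrix $\mathcal{H}$ whose blocks are the coefficients of $dZ\otimes d\overline{Z}$, $dZ\otimes d\overline{W}$ and $dW\otimes d\overline{W}$. By inspection of the metric these blocks are assembled from $Y^{-1}$, $VY^{-1}$ and $Y^{-1}V^tVY^{-1}$ (with $Y,V$ the imaginary parts of $Z,W$), exactly as in the displayed $n=m=1$ matrix $g$.

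The decisive structural fact is that this metric is Hermitian and in fact K\"ahler, so that every Christoffel symbol carrying mixed holomorphic and antiholomorphic indices vanishes; this is already manifest in the $n=m=1$ computation, where $D(dz)$ and $D(dw)$ involve only $dz,dw$ and no $d\overline{z},d\overline{w}$. Granting this, formula (2.1) collapses to the single-derivative expression
\[
\Gamma^{\gamma}_{\alpha\beta}=\sum_{\overline\delta}g^{\gamma\overline\delta}\,\frac{\partial g_{\alpha\overline\delta}}{\partial\zeta^{\beta}},
\]
which is automatically symmetric in the holomorphic indices $\alpha,\beta$ and, as one checks against the explicit coefficients $\Gamma^{z}_{zz},\Gamma^{w}_{zz},\Gamma^{w}_{zw}$ read off from the $n=m=1$ display, reproduces the degree-one formulas.

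The remaining work is mechanical and has three steps. First, invert $\mathcal{H}$: the blocks of $\mathcal{H}^{-1}$ are the matrix analogues of the entries $2y^2/A$, $2vy/A$, $2y/B+2v^2/A$ of the displayed $g^{-1}$, built from $Y$, $VY$ and a combination of $\tfrac{2}{B}Y$ with a term quadratic in $V$. Second, differentiate the blocks of $\mathcal{H}$, using that the sole coordinate dependence enters through $Y$ and $V$ and that $\partial Y^{-1}=-Y^{-1}(\partial Y)Y^{-1}$. Third, contract the differentiated blocks against the inverse blocks and collect the result into the bilinear form $(dZ,dW^{t})\,\mathcal{M}\,(dZ,dW)^{t}$ with $\mathcal{M}$ the matrix in the statement; the overall scalar $\tfrac{\sqrt{-1}B}{2A}$ and the corner term $2\tfrac{A}{B}Y^{-1}$ emerge at this stage. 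I would finish by specializing $n=m=1$ to recover the explicit $D(dz),D(dw)$ displayed before the theorem.

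The hard part is the combinatorial bookkeeping forced by the symmetry $Z=Z^t$: the independent holomorphic coordinates are the $z_{ij}$ with $i\le j$, so $\partial Y/\partial z_{ij}$ produces symmetrized elementary matrices and the trace expansion of the metric double-counts off-diagonal entries, generating factors of $2$ that must be tracked through the four-index contractions before the sum can be recognized as a clean matrix product. The one genuinely new feature of $D(dW)$ relative to $D(dZ)$ is the inhomogeneous summand $\sqrt{-1}\,dW\,Y^{-1}\,dZ$: it arises because the $dW\otimes d\overline{W}$ block of the metric itself depends on $Z$ through $Y$, so that differentiating it with respect to $z_{ij}$ and contracting against the leading $\tfrac{2}{B}Y$-part of $\mathcal{H}^{-1}$ yields a term not of the homogeneous bilinear type; isolating this piece cleanly is the last thing to get right.
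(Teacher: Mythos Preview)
Your plan is correct and tracks the paper's own proof almost exactly: Section~5 writes the metric in block form (their matrix $W$ with blocks $W^1,W^2,W^3$), proves the inverse explicitly (Lemma~5.1, matching the blocks you describe), then feeds both into the Christoffel formula and grinds through the index combinatorics, paying special attention to the $i\le j$ symmetrization forced by $Z=Z^t$ --- precisely the ``hard part'' you flagged.

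The one substantive difference is your K\"ahler shortcut. The paper never asserts or uses that the metric is K\"ahler; instead it imports from \cite{Yang Yin} a reduced Christoffel formula of the shape $\Gamma_{I,J}^K=\tfrac12\sum_L\big(W_{IL}\,\partial_{Z_J}M_{KL}+W_{JL}\,\partial_{Z_I}M_{KL}\big)$ (metric times derivative of the inverse, symmetrized) and computes from there. Your route via $\Gamma^{\gamma}_{\alpha\beta}=g^{\gamma\bar\delta}\partial_{\beta}g_{\alpha\bar\delta}$ is cleaner conceptually and explains \emph{why} no mixed holomorphic--antiholomorphic pieces appear, but it carries an extra obligation: you must actually verify $\partial_{\alpha}g_{\beta\bar\gamma}=\partial_{\beta}g_{\alpha\bar\gamma}$ for this metric (equivalently, exhibit a K\"ahler potential). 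In the $n=m=1$ case this is a two-line check, and it does go through in general, but it is not something you can simply ``grant'' --- make sure that verification appears in your write-up. Once it is in place, the remaining three steps (invert, differentiate, contract) are identical to what the paper does, and your identification of where the inhomogeneous $\sqrt{-1}\,dW\,Y^{-1}\,dZ$ term comes from is exactly right.
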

   The proof of the theorem by direct computation is given in the last section. We will use this theorem to compute
differentials of Jacobi forms and get differential operators.

\section{Differential Operators on Siegel-Jacobi Space}\label{section:differential}
\subsection{Action of Connection on Invariant Jacobi Forms}\label{section:differential1}
First, we will consider how the action of the Chern connection we obtained acts on an invariant function on $\mathbb{H}_{n,m}$.
Let $h\in J_{0,0}$ be an invariant Jacobi form. Then its image under the connection map is also invariant under the action of $\Gamma^J$, which is
\begin{equation}
\mathbb{D}(h)=\Tr\left(\frac{\partial h}{\partial Z}dZ\right)+\Tr\left(\frac{\partial h}{\partial W}dW\right).
\end{equation}

Here we use the notations
$$\frac{\partial}{\partial W}:=\left(
     \begin{array}{cccc}
       \frac{\partial}{\partial w_{1,1}} & \frac{\partial}{\partial w_{2,1}} &\ldots & \frac{\partial}{\partial w_{m,1}} \\
       \frac{\partial}{\partial w_{1,2}} & \ddots & & \vdots\\
       \vdots & & \ddots & \\
       \frac{\partial}{\partial w_{1,n}} &\ldots & & \frac{\partial}{\partial w_{m,n}} \\
     \end{array}
   \right),$$
   $$\frac{\partial}{\partial Z}:=\left(
     \begin{array}{cccc}
       \frac{\partial}{\partial z_{1,1}} & \frac{\partial}{2\partial z_{1,2}} &\ldots & \frac{\partial}{2\partial z_{1,n}} \\
       \frac{\partial}{2\partial z_{1,2}} & \ddots & & \vdots\\
       \vdots & & \ddots & \\
       \frac{\partial}{2\partial z_{1,n}} &\ldots & & \frac{\partial}{\partial z_{n,n}} \\
     \end{array}
   \right).$$

 Now we consider the translation formula of $dZ$ and $dW$. Recall the action of an element $g=\left(\left(
     \begin{array}{cc}
       A & B \\
       C & D \\
     \end{array}
   \right),(\lambda,\mu,\kappa)\right)\in \Gamma^J,$ we have its action of $\mathbb{H}_{m,n}$ is given by the formula (\ref{equation:action}). Then for an element $(Z,W)\in \mathbb{H}_{m,n}$, denote its image under the action of $g$ by $(\tilde{Z},\widetilde{W})$. It is  checked in the section 2 of \cite{Yang} that
 \begin{eqnarray}\label{equation:dwdz}
 \nonumber d(\tilde{Z})&=& ((CZ+D)^{-1})^{t}dZ(CZ+D)^{-1} \\
 d(\widetilde{W}) &=&  dW(CZ+D)^{-1}+\{\lambda-(W+\lambda Z+\mu)(CZ+D)^{-1}C\}dZ(CZ+D)^{-1}.
 \end{eqnarray}
 Since the $\d W$ term does not come from $\d (\tilde{Z})$, considering the $\d W$ term under the action of $\Gamma^J$, we can see the translation relation as follows:
   \begin{equation}\label{equation: partialW}
   \frac{\partial h((\widetilde{Z},\widetilde{W}))}{\partial \widetilde{W}}\d W=(CZ+D)\frac{\partial h(Z,W)}{\partial W}\d W.
   \end{equation}

If $m=n$, we can take determinant of the both sides, and then we get
\[\det\left(\widetilde{\frac{\partial h}{\partial W}}\right)=\det(CZ+D)\det\left(\frac{\partial h}{\partial W}\right).\]
This means $\det\left(\frac{\partial h}{\partial W}\right)\in J_{1,0}$.

If $m>n$, we can not take the determinant directly. But we can choose $n$ different rows of $\frac{\partial h}{\partial W}$, and take determinant of the new square matrixes so we can get $C_m^n$ operators in this form.

Next, we consider the twice differential $\mathbb{D}^2(h)$, which is
\begin{equation}
\mathbb{D}^2(h)=\Tr\left(\frac{\partial h}{\partial Z}\mathbb{D}(\d Z)+\mathbb{D}\left(\frac{\partial h}{\partial Z}\right)\d Z\right)+
\Tr\left(\frac{\partial h}{\partial W}\mathbb{D}(\d W)+\mathbb{D}\left(\frac{\partial h}{\partial W}\right)\d W\right).
\end{equation}
 Here $\mathbb{D}\left(\frac{\partial h}{\partial Z}\right)$ means the connection acts on every entry of the matrix.

 This is invariant under the action of $\Gamma^J$. By the discussion for the translation formula of $dZ$ and $dW$ above, we can easily see that the terms consisting of $\d w_i\d w_j$ are also invariant. Applying the expression for $\mathbb{D}(\d Z)$ and $\mathbb{D}(\d W)$ in Theorem \ref{theorem:connection}, we see that
 \begin{displaymath}
 \begin{aligned}
 \mathbb{D}^2(h)=&-\Tr\left(\frac{\sqrt{-1}B}{2A}\frac{\partial h}{\partial Z}(\d Z,\d W^t)\left(
     \begin{array}{cc}
      2\frac{A}{B}Y^{-1}+Y^{-1}V^tVY^{-1} & -Y^{-1}V^t \\
       -VY^{-1} & I\\
     \end{array}
   \right)\left(
     \begin{array}{c}
       \d Z  \\
       \d W \\
     \end{array}
   \right) +\mathbb{D}\left(\frac{\partial h}{\partial Z}\right)\d Z\right)\\
   &-\Tr\left(\frac{\sqrt{-1}B}{2A}\frac{\partial h}{\partial W}VY^{-1}(\d Z,\d W^t)\left(
     \begin{array}{cc}
       Y^{-1}V^tVY^{-1} & -Y^{-1}V^t \\
      -VY^{-1} & I \\
     \end{array}
   \right)\left(
     \begin{array}{c}
       \d Z  \\
       \d W  \\
     \end{array}
   \right)
   +\sqrt{-1}\d WY^{-1}\d Z\right)\\
   &-\Tr\left(\mathbb{D}\left(\frac{\partial h}{\partial W}\right)\d W\right)
 \end{aligned}
 \end{displaymath}

 Although the expression is complicated, we can see the $\d w_i\d w_j$ terms only show up in three parts in the above formula, which are:

 \[\Tr\left(\frac{\sqrt{-1}B}{2A}\frac{\partial h}{\partial Z} \d W^t\d W\right)+\Tr\left(\frac{\sqrt{-1}B}{2A}\frac{\partial h}{\partial W}VY^{-1} \d W^t\d W\right)+\Tr\left(\mathbb{D}\left(\frac{\partial h}{\partial W}\right)\d W\right)\]
 By the definition of $\Tr$, we can express the above section as
\begin{equation}\label{equation:transform}
\Tr\left(\frac{\sqrt{-1}B}{2A}\frac{\partial h}{\partial Z} \d W^t\d W\right)+\Tr\left(\frac{\sqrt{-1}B}{2A}\frac{\partial h}{\partial W}VY^{-1} \d W^t\d W\right)+\sum_{1\leq i\leq m}\sum_{1\leq j\leq n}\d\left(\frac{\partial h}{\partial w_{ij}}\right)\d w_{ij}.
\end{equation}

The last term can be revised again. Denote the $i$-row vector of $\d W$ by $\d W_i$. We have $\d\left(\widetilde{W}_i\right)=\d W_i(CZ+D)^{-1}+T$, where $T$ is the $\d Z$ term. Thus by calculation the last term $\sum_{1\leq i\leq m}\sum_{1\leq j\leq n}\d\left(\frac{\partial h}{\partial w_{ij}}\right)\d w_{ij}$  above can be rewritten as
\[\Tr\left(\sum_{i\leq j}\frac{\partial}{\partial W_i}\left(\frac{\partial h}{\partial W_j}\right)^t\d W_j^t \d W_i\right),\]
with $\frac{\partial}{\partial W_i}(\frac{\partial h}{\partial W_j})^t:=\left(
     \begin{array}{cccc}
       \frac{\partial^2 h}{\partial w_{j1}\partial w_{i1}} & \frac{\partial^2 h}{\partial w_{j2}\partial w_{i1}} &\ldots &\frac{\partial^2 h}{\partial w_{jn}\partial w_{i1}} \\
      \frac{\partial^2 h}{\partial w_{j1}\partial w_{i2}} & \ddots & &\vdots \\
     \vdots & & \ddots &\\
     \frac{\partial^2 h}{\partial w_{j1}\partial w_{in}}&\ldots& &\frac{\partial^2 h}{\partial w_{jn}\partial w_{in}}\\
     \end{array}
   \right)$ for any function $h$.

So now the formula (\ref{equation:transform}) becomes
\begin{equation}\label{realtransform}
   \Tr\left(\frac{\sqrt{-1}B}{2A}\frac{\partial h}{\partial Z} \d W^t\d W\right)+\Tr\left(\frac{\sqrt{-1}B}{2A}\frac{\partial h}{\partial W}VY^{-1} \d W^t\d W\right)+\Tr\left(\sum_{i\leq j}\frac{\partial}{\partial W_i}\left(\frac{\partial h}{\partial W_j}\right)^t\d W_j^t\d W_i\right).
\end{equation}

Moveover, from translation formula (\ref{equation: partialW}), and the translation of $\d W$, we can easily deduce that the last term is invariant itself already, ignoring the extra $\d Z$ part. Thus we obtain the section
\begin{equation}\label{equation:d2}
\Tr\left(\frac{\partial h}{\partial Z}\d W^t\d W\right)+\Tr\left(\frac{\partial h}{\partial W}VY^{-1}\d W^t\d W\right).
\end{equation}
We will get derivatives of Jacobi forms from this  section.

\subsection{Action of Connections on General Jacobi forms}\label{section:differential2}
Let $f\in J_{k,M}^{hol}$ be a holomorphic Jacobi form. It is known that $h:=fh_1\bar{f}$ is an invariant form on $\mathbb{H}_{n,m}$ (see \cite{Ziegler}, page 202), where
\[h_1=\det(Y)^k\exp\left\{-4\pi \cdot\Tr(MVY^{-1}V^t)\right\}.\]

We will compute the explicit expression of $\det\left(\frac{\partial h}{\partial W}\right)$ and of formula (\ref{equation:d2}) for this $h$. First we will compute $\det\left(\frac{\partial h}{\partial W}\right)$. We have the following lemma
\begin{lemma}
$\frac{\partial \left\{\Tr\left(MVY^{-1}V^t\right)\right\}}{\partial W}=-\sqrt{-1}Y^{-1}V^tM$
\end{lemma}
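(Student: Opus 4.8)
The plan is to prove the identity by a direct entrywise computation, keeping careful track of the transposition built into the definition of $\frac{\partial}{\partial W}$. First I would observe that the differentiation is holomorphic, so the imaginary part $Y$ of $Z$ is independent of $W$ and may be treated as a constant; the only $W$-dependence of $\Tr(MVY^{-1}V^t)$ enters through $V=\mathrm{Im}(W)$. Writing $v_{rs}=\frac{1}{2\sqrt{-1}}(w_{rs}-\bar w_{rs})$ gives the elementary rule $\frac{\partial v_{rs}}{\partial w_{q,p}}=\frac{1}{2\sqrt{-1}}\delta_{rq}\delta_{sp}$, which is the only analytic input.

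Next I would expand the trace in coordinates as $\Tr(MVY^{-1}V^t)=\sum_{a,b,c,d}M_{ab}V_{bc}(Y^{-1})_{cd}V_{ad}$ and apply the Leibniz rule when differentiating by $w_{q,p}$, the entry of $W$ that by the definition of the matrix $\frac{\partial}{\partial W}$ sits in its $(p,q)$ slot. Since $V$ appears quadratically, this produces exactly two terms. Substituting the rule above and contracting the Kronecker deltas, the first term collapses to $\frac{1}{2\sqrt{-1}}(Y^{-1}V^tM)_{pq}$ and the second to $\frac{1}{2\sqrt{-1}}(MVY^{-1})_{qp}$.

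The key step is to recognize that these two contributions coincide. Using that the index matrix $M$ is symmetric and that $Y$, hence $Y^{-1}$, is symmetric (since $Z$ is symmetric), one checks $(MVY^{-1})_{qp}=(Y^{-1}V^tM)_{pq}$. Adding the two equal terms cancels the factor of $2$ and yields $\frac{\partial}{\partial w_{q,p}}\Tr(MVY^{-1}V^t)=\frac{1}{\sqrt{-1}}(Y^{-1}V^tM)_{pq}=-\sqrt{-1}(Y^{-1}V^tM)_{pq}$. Since the $(p,q)$ entry of $\frac{\partial}{\partial W}$ is precisely $\frac{\partial}{\partial w_{q,p}}$, this is exactly the $(p,q)$ entry of $-\sqrt{-1}Y^{-1}V^tM$, which proves the lemma.

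The only real obstacle is bookkeeping: matching the transposed indexing of $\frac{\partial}{\partial W}$ against the index pattern coming out of the trace, and invoking the symmetry of $M$ and $Y$ at the right moment so that the two Leibniz terms genuinely combine rather than merely resemble one another. No analytic difficulty arises, as everything reduces to a finite algebraic manipulation.
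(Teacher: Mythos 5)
Your proof is correct and follows essentially the same route as the paper's: expand $\Tr(MVY^{-1}V^t)$ in coordinates, apply the Leibniz rule to the two occurrences of $V$ using $\frac{\partial v_{rs}}{\partial w_{qp}}=\frac{1}{2\sqrt{-1}}\delta_{rq}\delta_{sp}$, and invoke the symmetry of $M$ and $Y^{-1}$ to merge the two resulting terms into $-\sqrt{-1}(Y^{-1}V^tM)_{pq}$. If anything, you are more explicit than the paper about the transposed indexing built into the definition of $\frac{\partial}{\partial W}$, which the paper compresses into the closing remark ``Since $M,Y$ are symmetric.''
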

\begin{proof}

Denote $Y^{-1}$ by $R$. As
\[\Tr\left(MVY^{-1}V^t\right)=\sum_{a,b,r,s}M_{ab}V_{br}R_{rs}V_{as},\]
So
\begin{displaymath}
\begin{aligned}
 \frac{\partial \left\{\Tr(MVY^{-1}V^t)\right\}}{\partial W_{ij}}=&\frac{\partial\left\{ \sum_{a,b,r,s}M_{ab}V_{br}R_{rs}V_{as}\right\}}{\partial W_{ij}}\\
 =&-\frac{\sqrt{-1}}{2}\sum_{b,r}M_{ib}V_{br}R_{rj}-\frac{\sqrt{-1}}{2}\sum_{a,s}M_{ai}R_{js}V_{as}\\
 =&-\sqrt{-1}\sum_{b,r}M_{ib}V_{br}R_{rj}.
 \end{aligned}
\end{displaymath}

Since $M,Y$ are symmetric, we have $\frac{\partial\left\{ \Tr\left(MVY^{-1}V^t\right)\right\}}{\partial W}=-\sqrt{-1}Y^{-1}V^tM.$
\end{proof}
Furthermore, as $h_1=\det(Y)^k\exp(-4\pi \Tr\left(MVY^{-1}V^t\right)),$
we have
\[\frac{\partial h_1}{\partial W}=4\sqrt{-1}\pi Y^{-1}V^tMh_1,\]
and thus
\begin{displaymath}
\begin{aligned}
\det\left(\frac{\partial h}{\partial W}\right)=& \det\left(\frac{\partial f}{\partial W}h_1\overline{f}+\frac{\partial h_1}{\partial W}f\overline{f}+\frac{\partial \overline{f}}{\partial W}h_1f\right)\\
=&\det\left(\frac{\partial f}{\partial W}h_1\overline{f}+4\pi \sqrt{-1}(Y^{-1}V^tM)h_1f\overline{f}\right)\in J_{1,0}.
\end{aligned}
\end{displaymath}
As $h$ is invariant, we see that $\det\left(\frac{\partial f}{\partial W}/f+4\pi \sqrt{-1}Y^{-1}V^tM\right)$ is in $J_{1,0}$. So
\begin{equation}
R_1(f):=\det\left(\frac{\partial f}{\partial W}+4\pi \sqrt{-1}(Y^{-1}V^tM)f\right)\in J_{nk+1,nM}.
\end{equation}

The method above works for holomorphic Jacobi forms only, but the results also hold for non-holomorphic Jacobi forms.  Now we consider a not necessarily holomorphic Jacobi form $f$. As we discussed above $\frac{\partial (fh_1\overline{f})}{\partial W}$ and $\frac{\partial f}{\partial W}h_1\overline{f}+4\pi \sqrt{-1}(Y^{-1}V^tM)h_1f\overline{f}$ both satisfy the formula (\ref{equation: partialW}), and so is their difference $\frac{\partial \bar{f}}{\partial W}h_1f$.Then we know that
 \[\det\left(\frac{\partial \bar{f}}{\partial W}h_1f\right)\in J_{1,0}.\]
  Since $\det(Y)^{-2}\det(dZ)\det(d\bar{Z})$ is invariant under the Jacobi group (see \cite{Maass}), dividing it in the conjugation of the above Jacobi form, we see that
\[\left(\det\left(\frac{\partial f}{\partial \overline{W}}h_1\bar{f}\right)\right)^2\det\left(Y^{2}\right)\det\left(dZ\right)^{-1}\]
is invariant under the action of $\Gamma^J$. Thus
\[L_1:=\det\left(\frac{\partial f}{\partial \overline{W}}Y\right)\]
is a Jacobi form in $J_{nk-1,nM}$.

The two operators $R_1$ and $L_1$ are generalization of $Y_+$ and $Y_-$ in the introduction respectively.

Next we consider the explicit expression of formula (\ref{equation:d2}). Now this becomes

\begin{equation}\label{equation:d2final}
\Tr\left(\frac{\partial f}{\partial Z}h_1\bar{f}dW^tdW\right)+\Tr\left(\frac{\partial h_1}{\partial Z}f\bar{f}dW^tdW\right)+\Tr\left(\frac{\partial f}{\partial W}h_1\bar{f}VY^{-1}dW^tdW\right)+\Tr\left(\frac{\partial h_1}{\partial W}f\bar{f}VY^{-1}dW^tdW\right).
\end{equation}

This equals
\begin{displaymath}
\begin{aligned}
&\Tr\left(\left(\frac{\partial f}{\partial Z}-\frac{\sqrt{-1}k}{2}fY^{-1}-2\pi\sqrt{-1}Y^{-1}V^tMVY^{-1}f+4\pi\sqrt{-1}Y^{-1}V^tMVY^{-1}f+\frac{\partial f}{\partial W}VY^{-1}\right)h_1\bar{f}dW^tdW\right)\\
=&\Tr\left(\left(\frac{\partial f}{\partial Z}-\frac{\sqrt{-1}k}{2}fY^{-1}+2\pi\sqrt{-1}Y^{-1}V^tMVY^{-1}f+\frac{1}{2}\frac{\partial f}{\partial W}VY^{-1}+\frac{1}{2}Y^{-1}V^t\frac{\partial f}{\partial W}^t\right)h_1\bar{f}dW^tdW\right),
\end{aligned}
\end{displaymath}
by the following lemma.

\begin{lemma}\label{lemma}
We have
\begin{itemize}
  \item[(a)]~~ $\frac{\partial R_{st}}{\partial Z_{kl}}=2^{-\delta(k,l)-1}\sqrt{-1}\left(R_{kt}R_{sl}+R_{ks}R_{tl}\right)$, where $R=Y^{-1}$, and $\delta(i,j)$ is the Dirac symbol.
  \item[(b)]~~ $\frac{\partial\left\{ \Tr\left(MVY^{-1}V^t\right)\right\}}{\partial Z}=\frac{\sqrt{-1}}{2}Y^{-1}V^tMVY^{-1}$.
  \item[(c)]~~ $\frac{\partial \left\{\det(Y)\right\}}{\partial Z}=-\frac{\sqrt{-1}}{2}\det(Y)Y^{-1}$.
\end{itemize}
\end{lemma}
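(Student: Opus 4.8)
The plan is to reduce all three identities to a single computation, namely the holomorphic derivative of the entries of $Y$, after which (a)--(c) follow mechanically from the chain rule, the derivative-of-inverse formula, and Jacobi's formula for the determinant. Throughout I write $R=Y^{-1}$ and use that $Z=X+\sqrt{-1}\,Y$ is \emph{symmetric}, so that its imaginary part is $Y_{pq}=(z_{pq}-\overline{z}_{pq})/(2\sqrt{-1})$ with $z_{pq}=z_{qp}$. The one subtlety that pervades the whole lemma is precisely this symmetry: because $z_{kl}$ and $z_{lk}$ must be identified, the Wirtinger derivative responds symmetrically, giving
\[
\frac{\partial Y_{pq}}{\partial z_{kl}}=\frac{1}{2\sqrt{-1}}\bigl(\delta_{pk}\delta_{ql}+\delta_{pl}\delta_{qk}-\delta_{kl}\,\delta_{pk}\delta_{qk}\bigr),
\]
i.e. the symmetrized Kronecker response off the diagonal and a single term on it. This is exactly the behaviour that the factor $2^{\delta(k,l)-1}$ in the statements is designed to record, and keeping it consistent with the normalizing factor built into the operator $\partial/\partial Z$ is the one place where sign and factor slips are easy; the remaining calculus is routine.

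For part (a) I would differentiate $RY=I$ to get $\partial R/\partial z_{kl}=-R\,(\partial Y/\partial z_{kl})\,R$, substitute the boxed expression, and contract the deltas. Using $R=R^t$ and $-1/(2\sqrt{-1})=\sqrt{-1}/2$, this collapses to $\tfrac{\sqrt{-1}}{2}(R_{kt}R_{sl}+R_{ks}R_{tl})$ off the diagonal and to exactly half of that bracket on it; written uniformly this is $2^{-\delta(k,l)-1}\sqrt{-1}(R_{kt}R_{sl}+R_{ks}R_{tl})$, since on the diagonal the bracket doubles to $2R_{ks}R_{kt}$ while the prefactor drops to $2^{-2}$.

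For part (b) I would expand $\Tr(MVY^{-1}V^t)=\sum_{a,b,r,s}M_{ab}V_{br}R_{rs}V_{as}=\sum_{r,s}R_{rs}(V^tMV)_{sr}$ and note that $V$ depends only on $W$, so only $R_{rs}$ is differentiated. Applying part (a) entrywise, the two terms $R_{ks}R_{rl}$ and $R_{kr}R_{sl}$ each contract against $(V^tMV)_{sr}$ to give $(RV^tMVR)_{kl}$; here I use that both $R$ and $V^tMV$ are symmetric, so the two contributions coincide and add. The doubling yields a prefactor $2^{-\delta(k,l)}$, and the operator factor $2^{\delta(k,l)-1}$ in the definition of $\partial/\partial Z$ cancels it to leave the clean $\tfrac{\sqrt{-1}}{2}Y^{-1}V^tMVY^{-1}$.

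For part (c) I would invoke Jacobi's formula $\partial(\det Y)/\partial Y_{pq}=\det(Y)R_{pq}$, apply the chain rule through $\partial Y_{pq}/\partial z_{kl}$, and contract: off the diagonal the symmetrized delta produces $R_{kl}+R_{lk}=2R_{kl}$, again compensated by the $2^{\delta(k,l)-1}$ normalization, while on the diagonal both factors are trivial; either way one obtains $-\tfrac{\sqrt{-1}}{2}\det(Y)Y^{-1}$. In summary, the real work is fixing the formula for $\partial Y_{pq}/\partial z_{kl}$ and tracking the interplay between the symmetric-variable identification and the factor $2^{\delta(k,l)-1}$; this is the only genuine obstacle, and once it is settled parts (a)--(c) are parallel and essentially computational.
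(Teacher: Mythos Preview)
Your proposal is correct and follows essentially the same route as the paper: differentiate the relation $YR=I$ for (a), then feed (a) into the trace expansion for (b), and use the cofactor/Jacobi identity for (c), with the $2^{\delta(k,l)-1}$ normalization in $\partial/\partial Z$ absorbing the symmetrization. The only cosmetic difference is in (a): the paper differentiates $\sum_s Y_{is}R_{st}=\delta(i,t)$ to obtain a linear system and then \emph{verifies} that the stated formula solves it, whereas you invoke the closed form $\partial R=-R(\partial Y)R$ directly after first pinning down $\partial Y_{pq}/\partial z_{kl}$; these are equivalent, and your version is arguably slightly cleaner since it derives rather than checks.
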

\begin{proof}
For (a), $R$ is uniquely determined by the equations:
$$\sum_{s}Y_{is}R_{st}=\delta(i,t)$$
where $\delta$ is the Dirac symbol with
\begin{displaymath}
\delta(i,t)= \left\{ \begin{array}{ll}
0 & \textrm{if $i\neq t$}\\
1 & \textrm{if $i=t$}
\end{array} \right.
\end{displaymath}
Taking derivations on both sides, we get:
$$\sum_s \frac{\partial Y_{is}}{\partial Z_{kl}}R_{st}+\sum_s Y_{is}\frac{\partial R_{st}}{\partial Z_{kl}}=0$$
That is
\begin{displaymath}
\left\{ \begin{array}{lll}
\sum_s Y_{is}\frac{\partial R_{st}}{\partial Z_{kl}}=0     & \textrm {if $i \neq k,l$}\\
-\frac{\sqrt{-1}}{2}R_{lt}+\sum_s Y_{ks}\frac{\partial R_{st}}{\partial Z_{kl}}=0 & \textrm {if $i=k$}\\
-\frac{\sqrt{-1}}{2}R_{kt}+\sum_s Y_{ls}\frac{\partial R_{st}}{\partial Z_{kl}}=0 & \textrm {if $i=l $}
\end{array} \right.
\end{displaymath}

These equations determine $\frac{\partial R_{st}}{\partial Z_{kl}}$ uniquely. And it is easily checked that if we take $\frac{\partial R_{st}}{\partial Z_{kl}}=2^{-\delta(k,l)-1}\sqrt{-1}(R_{kt}R_{sl}+R_{ks}R_{tl})$, then the equations above are all satisfied, and this finishes the proof of (a).

For (b), by (a), we have:
\begin{displaymath}
\begin{aligned}
\left(\frac{\partial \Tr(MVY^{-1}V^t)}{\partial Z}\right)_{kl}
&=2^{\delta(k,l)-1}\frac{\partial \left(\sum_{a,b,t,s}M_{ab}V_{bs}R_{st}V_{at}\right)}{\partial Z_{kl}}\\&=\frac{\sqrt{-1}}{4}\sum_{a,b,s,t}M_{ab}V_{bs}(R_{kt}R_{sl}+R_{ks}R_{tl})V_{at}\\
&=\frac{\sqrt{-1}}{2}\sum_{a,b,s,t}R_{kt}V_{at}M_{ab}V_{bs}R_{sl}
\end{aligned}
\end{displaymath}
and the statement follows.

As for (c), it is easily checked that $\frac{\partial (\det(Y))}{\partial Z_{i,j}}=-2^{\delta_{ij}-2}\sqrt{-1}(Y_{i,j}^\ast+Y_{j,i}^\ast)$, with $Y_{i,j}^\ast$ the cofactor of $Y_{i,j}$. So $\frac{\partial (\det(Y))}{\partial Z}=-\frac{\sqrt{-1}}{2}\det(Y)Y^{-1}$ follows from the definition of $\frac{\partial}{\partial Z}$.
\end{proof}

 For any symmetric matrix function $A$, if we have $\Tr(A\d Z)$ invariant under the action of $\Gamma^J$, then we have
 \[\Tr(\widetilde{A}\d(\tilde{Z}))=\Tr\left(\widetilde{A}((CZ+D)^{-1})^{t}\d Z(CZ+D)^{-1}\right)=\Tr\left(
 (CZ+D)^{-1}\widetilde{A}((CZ+D)^{-1})^{t}\d Z\right)=\Tr(A\d Z)\]

 This means that
 \[\Tr\left((\widetilde{A}-(CZ+D)A(CZ+D)^t)\d Z\right)=0,\]
 thus $(\widetilde{A}-(CZ+D)A(CZ+D)^t)$ has to be 0 and $A$ satisfies the translation law of
 \[\widetilde{A}=(CZ+D)A(CZ+D)^t\]
 with $\widetilde{A}=g(A), g=\left(\left(
     \begin{array}{cc}
       A & B \\
       C & D \\
     \end{array}
   \right),(\lambda,\mu,\kappa)\right)\in \Gamma^J$.

We also know that
\[\widetilde{(\d W^t\d W)}=((CZ+D)^{-1})^{t}(\d W^t\d W)(CZ+D)^{-1}+T\]
where $T$ is some differential forms in coordinates of $dZ$. So the above argument works for $\d W^t\d W$ as well. More precisely, if we have $\Tr(A\d W^t\d W)$ is invariant ignoring its $\d Z$ part in the translation formula, then by the same argument as above, $A$ also have to satisfy the translation formula
\[\widetilde{A}=(CZ+D)A(CZ+D)^t\]

Apply this to (\ref{equation:d2final}), we see that

\[\left(\frac{\partial f}{\partial Z}-\frac{\sqrt{-1}k}{2}fY^{-1}+2\pi\sqrt{-1}Y^{-1}V^tMVY^{-1}f+\frac{1}{2}\frac{\partial f}{\partial W}VY^{-1}+\frac{1}{2}Y^{-1}V^t\frac{\partial f}{\partial W}^t\right)\]
satisfies this translation formula.

Then taking determinant of the above matrix, we obtain that

\[R_2(f):=\det\left(\frac{\partial f}{\partial Z}-\frac{\sqrt{-1}k}{2}fY^{-1}+2\pi\sqrt{-1}Y^{-1}V^tMVY^{-1}f+\frac{1}{2}\frac{\partial f}{\partial W}VY^{-1}+\frac{1}{2}Y^{-1}V^t\frac{\partial f}{\partial W}^t\right)\]
is an element in $J_{nk+2,nM}$

\begin{remark}
Here we use the symmetric matrix of $\frac{\partial h}{\partial Z}+\frac{\partial h}{\partial W}VY^{-1}$.
But actually one can show that $\frac{\partial h}{\partial Z}+\frac{\partial h}{\partial W}VY^{-1}$ works already. We choose the symmetric matrix because it seems more natural.
\end{remark}

As we pointed out before, what we get are also right for non-holomorphic Jacobi forms. From this we see that
$\Tr\left(\frac{\partial \bar{f}}{\partial Z}dW^tdW\right)fh_1+\Tr\left(\frac{\partial \bar{f}}{\partial W}VY^{-1}dW^tdW\right)fh_1$ is invariant ignoring the $dZ$ part. So the determinant
 \[\det\left(\frac{\partial \bar{f}}{\partial Z}fh_1+\frac{1}{2}\frac{\partial \bar{f}}{\partial W}VY^{-1}fh_1+\frac{1}{2}Y^{-1}V^t\frac{\partial \overline{f}}{\partial W}^tfh_1\right)\in J_{2,0}\]
Taking conjugation and dividing $\det(Y)^{-2}\det(dZ)\det(d\bar{Z})$ as we did before, we get
\[L_2(f):=\det\left(\frac{\partial f}{\partial \bar{Z}}Y^2+\frac{1}{2}\frac{\partial f}{\partial \overline{W}}VY+\frac{1}{2}YV^t\frac{\partial f}{\partial \overline{W}}^t\right)\in J_{nk-2,nM}\]

Summary the results above, we have the following theorem
\begin{theorem}\label{theorem:R}

\begin{itemize}
    \item[(a)] If $n=m$, for any $f\in J_{k,M}$, $R_1(f)=\det\left(\frac{\partial f}{\partial W}+4\pi \sqrt{-1}(Y^{-1}V^tM)f\right)\in J_{nk+1,nM}$;
  \item[(b)]If $n=m$, with the same $f$, $L_1(f)=\det\left(\frac{\partial f}{\partial \overline{W}}Y\right)$ is in $J_{nk-1,nM}$.
   \item[(c)]$R_2(f)=\det\left(\frac{\partial f}{\partial Z}-\frac{\sqrt{-1}k}{2}fY^{-1}+2\pi\sqrt{-1}Y^{-1}V^tMVY^{-1}f+\frac{1}{2}\frac{\partial f}{\partial W}VY^{-1}+\frac{1}{2}Y^{-1}V^t\frac{\partial f}{\partial W}^t\right)\in J_{nk+2,nM}$
   \item[(d)]$L_2(f)=\det\left(\frac{\partial f}{\partial \bar{Z}}Y^2+\frac{1}{2}\frac{\partial f}{\partial \overline{W}}VY+\frac{1}{2}YV^t\frac{\partial f}{\partial \overline{W}}^t\right)\in J_{nk-2,nM}$
\end{itemize}
\end{theorem}
If $n$ and $m$ equals 1, then these four operators $R_1,L_1,R_2,L_2$ in this theorem  are just the operators $Y_+,Y_-,X_+,X_-$ we introduced in the introduction. Though they are not linear, we can use these to construct invariant differential operators and Maass-Shimura operators.

\subsection{Invariant Differential Operators}\label{section:invariant}
In this subsection we will study the invariant differential operators for the Siegel-Jacobi space, that is, differential
operators invariant under the action of the Jacobi group. For the space $\mathbb{H}_{1,m}$, things are clear, they just come from the composition of raising and lowering operators, as is shown in proposition 2.8 of \cite{Conley}.
While for higher degree cases, not much is known. In the Siegel modular space case, Maass has constructed invariant differential operators successfully. And we can apply his method to the Siegel-Jacobi space case. Together with what we discussed above for the higher degree Jacobi forms, we are able to get invariant differential operators generalizing Maass's results.

Set
\begin{eqnarray*}
Y_+&=&\frac{\partial}{\partial W},\quad Y_{+,k}=\frac{\partial}{\partial W_k},\quad Y_-=\frac{\partial}{\partial\overline{W}}^tY,\quad Y_{-,k}=\frac{\partial}{\partial\overline{W}_k}^tY,\\
  X_+&=& 2\sqrt{-1}\frac{\partial}{\partial Z}+\sqrt{-1}Y^{-1}V^t\frac{\partial}{\partial W}+\left(\sqrt{-1}Y^{-1}V^t\frac{\partial}{\partial W}\right)^t, \\
  X_-&=&Y(Y\overline{Y}_+)^t,\\
  K &=& 2\sqrt{-1}Y\frac{\partial}{\partial Z}+\sqrt{-1}V^t\frac{\partial}{\partial W}^t+\sqrt{-1}\left(Y^{-1}V^t\frac{\partial}{\partial W}^tY\right)^t=YX_+,\\
  \Lambda &=& \sqrt{-1}Y\frac{\partial}{\partial \overline{Z}}+\sqrt{-1}V^t\frac{\partial}{\partial \overline{W}}^t+\sqrt{-1}\left(Y^{-1}V^t\frac{\partial}{\partial \overline{W}}^tY\right)^t.
\end{eqnarray*}

Then from sections \ref{section:differential1} and \ref{section:differential2}, we have
\begin{prop}
\begin{eqnarray*}
  \widetilde{Y}_+&=& (CZ+D)Y_+;\quad\widetilde{Y}_-=Y_-(CZ+D)^{-1}; \\
  \widetilde{Y}_{+,k} &=& (CZ+D)Y_{+,k};\quad\widetilde{Y}_{-,k}=Y_{-,k}(CZ+D)^{-1}; \\
  \widetilde{X}_+ &=& (CZ+D)\left((CZ+D)X_+\right)^t; \quad\widetilde{X}_-=\left((CZ+D)^{-1}\right)^{t}\left((CZ+D)^{-1}X_-\right)^t; \\
  \widetilde{K}&=& \left((C\overline{Z}+D)^{-1}\right)^t\left((CZ+D)K^t\right)^t;\quad \widetilde{\Lambda}=\left((CZ+D)^{-1}\right)^t\left((C\overline{Z}+D)\Lambda^t\right)^t.
\end{eqnarray*}

Here $\widetilde{}$ means after the action of $g=\left(\left(
     \begin{array}{cc}
       A & B \\
       C & D \\
     \end{array}
   \right),(\lambda,\mu,\kappa)\right)\in \Gamma^J$.
\end{prop}
\begin{proof}
The statement for $Y_+$ is obvious from formula(\ref{equation: partialW}), so
\[\frac{\partial}{\partial \widetilde{\overline{W}}}=(C\bar{Z}+D)\frac{\partial}{\partial \overline{W}}.\]
And we have
\begin{equation}
\widetilde{Y}=\left((C\bar{Z}+D)^{-1}\right)^tY(CZ+D)^{-1}=\left((CZ+D)^{-1}\right)^tY(C\bar{Z}+D)^{-1}
\end{equation}
Thus $\widetilde{Y}_-=Y_-(CZ+D)^{-1}.$ The statements for $Y_{+,k}$ and $Y_{-,k}$ can also be checked easily.
In formula(\ref{equation:d2}), we know that for any $h$ which is invariant under the action of $\Gamma^J$,
$\Tr\left(X_+(h)dZ\right)$ is also invariant. This means that
\[\widetilde{X}_+=(CZ+D)\left((CZ+D)X_+\right)^t,\]
thus
\begin{eqnarray*}
  \widetilde{K} &=& \left((C\overline{Z}+D)^{-1}\right)^t\left((CZ+D)K^t\right)^t; \\
  \widetilde{\Lambda} &=& \left((CZ+D)^{-1}\right)^t\left((C\overline{Z}+D)\Lambda^t\right)^t;\\
  \widetilde{X}_- &=&\left((CZ+D)^{-1}\right)^t\left((CZ+D)^{-1}X_-\right)^t
\end{eqnarray*}

\end{proof}

$\Tr(\Lambda K)$ is not invariant, but we have relation
\[\left((C\overline{Z}+D)\Lambda^t\right)^t=\Lambda\left(C\overline{Z}+D)^t-\frac{n+1}{2}(Z-\overline{Z}\right)C^t,\]
so we get
\[\widetilde{\Lambda}\widetilde{K}=\left((CZ+D)^{-1}\right)^t\left(\Lambda\left(C\overline{Z}+D\right)^t-\frac{n+1}{2}(CZ+D)^t+\frac{n+1}{2}(C\overline{Z}+D)^t\right)\left((C\overline{Z}+D)^{-1}\right)^t\left((CZ+D)K^t\right)^t\]
which can be written as
\[\widetilde{\Lambda}\widetilde{K}+\frac{n+1}{2}\widetilde{K}=\left((CZ+D)^{-1}\right)^t\left((CZ+D)\left(\Lambda K+\frac{n+1}{2}K\right)^t\right)^t.\]
Thus
\[\Tr\left(\widetilde{\Lambda}\widetilde{K}+\frac{n+1}{2}\widetilde{K}\right)=\Tr\left(\Lambda K+\frac{n+1}{2}K\right).\]
Following H.Maass, we set
\[A^{(1)}=\left(\Lambda K+\frac{n+1}{2}K\right).\]
\[A^{(j)}=A^{(1)}A^{(j-1)}-\frac{n+1}{2}\Lambda A^{(j-1)}+\frac{1}{2}\Lambda \Tr\left(A^{(j-1)}\right)+\frac{1}{2}(Z-\overline{Z})\left(
(Z-\bar{Z})^{-1}(\Lambda^tA^{(j-1)t})^t\right)^t.\]
By the same argument as in page 111 to 116 of \cite{Maass}, one can show that
\[\widetilde{A}^{(j)}=\left((CZ+D)^{-1}\right)^t\left((CZ+D)A^{(j)t}\right)^t,\]
and \[H^j:=\Tr(A^{(j)})\] is invariant under the action of the Jacobi group.

Moreover, if we set
\[T_{k,l}^j=\Tr(Y_{-,k}^tY_{+,l}^tA^{(j)}),\quad U_{k,l}=\Tr(Y_{-,k}^tY_{-,l}X_+),\quad V_{k,l}=\Tr(Y_{+,k}Y_{+,l}^tX_-),\]
then they are all invariant differential operators.
\begin{theorem}\label{theorem:invariant}
The operator matrix $Y_-Y_+$ is an invariant differential operator matrix on $\mathbb{H}_{n,m}$, thus each of the $(k,l)$ entries
of this matrix is an invariant differential operator on $\mathbb{H}_{n,m}$. And the operators $H^j,T_{k,l}^j,U_{k,l},V_{k,l}$ are all invariant differential
operators.
\end{theorem}
\begin{remark}
The invariant operator $Y_-Y_+$ has already been known in proposition 4.2 of \cite{Yang 2}. We hope that all the invariant differential operators come as the combination of the operators in Theorem \ref{theorem:invariant} as in the case of $\mathbb{H}_{1,m}$.
\end{remark}
\subsection{Maass-Shimura Type Differential Operators for Jacobi Forms}\label{section:shimura}

The Maass-Shimura differential operators are operators defined for Siegel modular forms. Shimura used this operator to study the properties of nearly holomorphic Siegel modular forms and obtained the algebraicity of values of Siegel modular forms. Let $\mathbb{H}_n$ be the usual Siegel upper half plane with coordinate $Z=(z_{ij})$. The imaginary part of $Z$ is denoted by $Y$ as before. Let $f$ be a Siegel modular form of weight $k$ on $\mathbb{H}_n$,
the Maass-Shimura differential operator acts on $f$ as
\[\det(Y)^{\kappa-k-1}\det\left(\frac{\partial}{\partial Z}\right)(\det(Y)^{k+1-\kappa}f)\]
where $\kappa$ equals $\frac{n+1}{2}$. The Maass-Shimura operator maps a weight $k$ modular form to a modular form of weight $k+2$.

Now we can first define a similar differential operator for Jacobi forms by using the results above. Let
\[H_{k,M}:=\det(Y)^{\kappa-k-1}\exp\left\{4\pi \Tr(MVY^{-1}V^t)\right\}\det\left(X_+\right)\left(\det(Y)^{k+1-\kappa}\exp\left\{-4\pi \Tr(MVY^{-1}V^t)\right\}\right)\]

Then we have the following theorem
\begin{theorem}\label{theorem:h}
$H_{k,M}$ is a differential operator mapping from $J_{k,M}$ to $J_{k+2,M}$.
\end{theorem}
\begin{proof}
We show that the theorem is true by comparing it with the Maass-Shimura operator.

First, consider the case $k=0, M=0$. Now the operator becomes
\[H_{0,0}=\det(Y)^{\kappa-1}\det\left(X_+\right)(\det(Y)^{1-\kappa})\]
Recall that $X_+=2\sqrt{-1}\frac{\partial}{\partial Z}+\sqrt{-1}Y^{-1}V^t\frac{\partial}{\partial W}+\Big(\sqrt{-1}Y^{-1}V^t\frac{\partial}{\partial W}\Big)^t,$ and under the action of $G^J$, we have
\[\widetilde{X}_+=(CZ+D)\left((CZ+D)X_+\right)^t\]

For Siegel modular forms, the classical Maass-Shimura operator is
\[\det(Y)^{\kappa-1}\det\left(\frac{\partial}{\partial Z}\right)(\det(Y)^{1-\kappa}).\]
And $\frac{\partial}{\partial Z}$ satisfies the translation law
$\frac{\partial}{\partial \widetilde{Z}}=(CZ+D)\left((CZ+D)\frac{\partial}{\partial Z}\right)^t,$ which is formally the
same as $X_+$. Since $X_+$ acts the same as $\frac{\partial}{\partial Z}$ on $CZ+D$, so $\det(X_+)$ and $\det\left(\frac{\partial}{\partial Z}\right)$ also satisfy the same translation law. Moreover, $\widetilde{Y}=\left((C\bar{Z}+D)^{-1}\right)^tY(CZ+D)^{-1}$, so its composition with $\frac{\partial}{\partial W}$ does not change the translation formula.
So $H_{0,0}$ and the Maass-Shimura operator satisfies the same translation law formally. This means that $H_{0,0}$ maps from $J_{0,0}$ to $J_{2,0}$, which is the basic case of our theorem.

For general weight and index case, recall that we have the invariant form
\[h=f\det(Y)^k\exp\left\{-4\pi \Tr(MVY^{-1}V^t)\right\}\bar{f}.\]

So apply $H_{0,0}$ to $h$, we get that
\[\det(Y)^{\kappa-1}\det\left(X_+\right)\left(\det(Y)^{1-\kappa}f\det(Y)^k\exp\left\{-4\pi \Tr(MVY^{-1}V^t)\right\}\bar{f}\right)\]
is an element of $J_{2,0}$. Then multiplying $\frac{f}{h}$, this becomes
\[H_{k,M}f=\det(Y)^{\kappa-k-1}\exp\left\{4\pi \Tr(MVY^{-1}V^t)\right\}\det\left(X_+\right)\left(\det(Y)^{k+1-\kappa}\exp\left\{-4\pi \Tr(MVY^{-1}V^t)\right\}f)\right),\]
which is now a Jacobi form of weight $k+2$ and index $M$. Thus we have proved the theorem.

\end{proof}

This operator can be viewed as a generalization of the Maass-Shimura operator if we restrict a Jacobi form from $\mathbb{H}_{n,m}$ to $\mathbb{H}_{n}$. Also this is a generalization of the operator $X_+$ in the introduction.

By viewing the  group $\Gamma^J$ as a subgroup of $\Sp(g,\mathbb{Z})$, we can define another analogue of the Maass-shimura operator. In the degree 1 case, this is just the heat operator. It is  defined as follows
\begin{eqnarray*}
L_{k,M}:&=&\det(Y)^{\kappa'-k-1}\exp\left\{-4\pi \Tr(MVY^{-1}V^t)\right\}\det\left(-8\pi\sqrt{-1}\frac{\partial }{\partial Z}+\frac{\partial}{\partial W}M^{-1}(\frac{\partial }{\partial W})^t\right)\\
& &(\det(Y)^{k+1-\kappa'}\exp\left\{-4\pi \Tr(MVY^{-1}V^t)\right\})
\end{eqnarray*}
where $\kappa'=\frac{n+m+1}{2}$.
\begin{theorem}\label{theorem:L}
$L_{k,M}$ is a differential operator mapping from $J_{k,M}$ to $J_{k+2,M}$.
\end{theorem}
\begin{proof}
Following from (\cite{Maass},P.317), Maass showed that for Siegel upper half plane of degree $g$, we have the following transformation formula,  by the action of an $S=\left(
     \begin{array}{cc}
       A & B \\
       C & D \\
     \end{array}
   \right)\in \Sp(g,\mathbb{Z})$,
\begin{equation}\label{equation:determinant}
\left|\frac{\partial }{\partial \widetilde{Z}} \right|=\left|CZ+D\right|^{\frac{g+3}{2}}\left|\frac{\partial }{\partial Z}\right||CZ+D|^{\frac{1-g}{2}},
\end{equation} where $\widetilde{Z}$ is the image of $Z$ by the action of $S$.

      Now embed $\mathbb{H}_{n,m}$ into $\mathbb{H}_{n+m}$ by sending $(Z,W)$ to $\left(
     \begin{array}{cc}
       Z & W \\
       W^t & Z' \\
     \end{array}
   \right)$.
   Also, embed $G^J=\Sp(n,\mathbb{R})\ltimes H_\mathbb{R}^{(n,m)}$ into $\Sp(n+m,\mathbb{R})$ in the classical way by sending $(M,(\lambda,\mu;\kappa))$  to

   \[\left(
     \begin{array}{cccc}
       A & 0 & B & \mu^t \\
       \lambda & I & \mu & \kappa \\
       C & 0 & D & -\lambda^t \\
       0 & 0 & 0 & I \\
     \end{array}
   \right),\]
   where $M=\left(
     \begin{array}{cc}
       A & B \\
       C & D \\
     \end{array}
   \right).$

   We know the action of $G^J$ on $\mathbb{H}_{n,m}$ coincides with the action of $\Sp(n+m,\mathbb{R})$ on $\mathbb{H}_{n+m}$.  Now fix a symmetric matrix $M$. Applying formula(\ref{equation:determinant}), we see that $\left|
     \begin{array}{cc}
       \frac{\partial}{\partial \widetilde{Z}} & \frac{\partial}{\partial \widetilde{W}} \\
       \frac{\partial}{\partial \widetilde{W}}^t & M \\
     \end{array}
   \right|=|CZ+D|^{\frac{g+3}{2}}\left|
     \begin{array}{cc}
       \frac{\partial}{\partial Z} & \frac{\partial}{\partial W} \\
       \frac{\partial}{\partial W}^t & M \\
     \end{array}
   \right||CZ+D|^{\frac{1-g}{2}}.$ Combining this with $|\widetilde{Y}|=|Y||CZ+D|^{-1}|C\overline{Z}+D|^{-1}$, we see that
   the operator

   \[l_{k,M}:=|Y|^{\kappa'-1}\left|
     \begin{array}{cc}
       \frac{\partial}{\partial \widetilde{Z}} & \frac{\partial}{\partial \widetilde{W}} \\
       \frac{\partial}{\partial \widetilde{W}}^t & M \\
     \end{array}
   \right||Y|^{1-\kappa'}\]
   maps a $\Gamma^J$-invariant form to a weight 2 Jacobi form. Thus $l_{k,M}\left(f\det(Y)^k\exp\left\{-4\pi \Tr(MVY^{-1}V^t)\right\}\bar{f}\right)$ is a weight 2 Jacobi form and
   so is $l_{k,M}\left(f\det(Y)^k\exp\left\{-4\pi \Tr(MVY^{-1}V^t)\right\}\bar{f}\right)/\left(f\det(Y)^k\exp\left\{-4\pi \Tr(MVY^{-1}V^t)\right\}\bar{f}\right)$, we then deduce Theorem (\ref{theorem:L}) by the following lemma.
    \end{proof}
\begin{lemma}
 $\left|
     \begin{array}{cc}
       \frac{\partial}{\partial Z} & \frac{\partial}{\partial W} \\
       \frac{\partial}{\partial W}^t & M \\
     \end{array}
   \right|$ can be expressed as $\left|\frac{\partial}{\partial Z}+\frac{\partial}{\partial W}M^{-1}\frac{\partial}{\partial W}^t\right|\cdot|M|$
\end{lemma}
\begin{proof}
Multiplying $\left(
     \begin{array}{cc}
       M & 0 \\
       -M^{-1}\frac{\partial}{\partial W}^tM & M^{-1} \\
     \end{array}
   \right)$ to $\left(
     \begin{array}{cc}
       \frac{\partial}{\partial Z} & \frac{\partial}{\partial W} \\
       \frac{\partial}{\partial W}^t & M \\
     \end{array}
   \right)$. We get the result we need easily.
\end{proof}

\section{Computation of the connections}\label{section:computation}
In this section we give the proof of Theorem \ref{theorem:connection} by direct computation. The notations and ideas are similar to \cite{Yang Yin}. Set $\Omega=\{(i,j)| 1\leq i\leq j\leq n\}, \Omega'=\{(i',j')|1\leq i'\leq m,1\leq j'\leq n\}$, and fix the notation
$I=(i,j)$, $J=(r,s)$, $K=(p,q)$, $L=(a,b)\in\Omega$; $I'=(i',j')$, $J=(r',s')$, $K'=(p',q')$, $L'=(a',b')\in\Omega'$.
In the following, we define $Z_I:=Z_{ij},W_{I'}=W_{i'j'}$.

Let $R:=(R_{ij})_{n\times n}=Y^{-1}$. Then the metric on the Siegel-Jacobi space in Theorem \ref{theorem:metric} is given by
\begin{displaymath}
\begin{aligned}
ds_{n,m;A,B}^2=&A\Tr(RdZRd\overline{Z})\\
&+B\left(\Tr(RV^tVRdZRd\overline{Z})+\Tr(R(dW)^td\overline{W})
-\Tr(VRdZR(d\overline{W}^t))-\Tr(VRd\overline{Z}R(dW)^t)\right)\\
=&A\sum_{i=1}^n\sum_{j=1}^n\sum_{r=1}^n\sum_{s=1}^n R_{ir}R_{js}dZ_{ij}d\bar{Z}_{rs}\\
&+B\sum_{a=1}^n\sum_{b=1}^m\sum_{k=1}^n\sum_{i=1}^n\sum_{j=1}^n\sum_{r=1}^n\sum_{s=1}^n R_{sa}V_{ba}V_{bk}R_{ki}dZ_{ij}R_{jr}d\bar{Z}_{rs}\\
&+B\sum_{r=1}^n\sum_{j=1}^n\sum_{i=1}^mR_{rj}dW_{ij}d\bar{W}_{ir}-B\sum_{r=1}^m\sum_{k=1}^n\sum_{i=1}^n\sum_{j=1}^n\sum_{s=1}^nV_{rk}R_{ki}dZ_{ij}R_{js}d\bar{W}_{rs}\\
&-B\sum_{r=1}^m\sum_{k=1}^n\sum_{i=1}^n\sum_{j=1}^n\sum_{s=1}^nV_{rk}R_{ki}\bar{dZ_{ij}}R_{js}dW_{rs}\\
=&A\sum_{i\leq j}\sum_{r\leq s}2^{1-\delta(i,j)-\delta(r,s)}\times (R_{ir}R_{js}+R_{jr}R_{is})d\bar{Z}_{rs}dZ_{ij}\\
&+B\sum_{i\leq j}\sum_{r\leq s}\sum_{k=1}\sum_{l=1}\sum_{a=1} 2^{-\delta(i,j)-\delta(r,s)}(R_{sk}V_{ak}V_{al}R_{li}R_{jr}\\
&+R_{sk}V_{ak}V_{al}R_{lj}R_{ir}+R_{rk}V_{ak}V_{al}R_{li}R_{js}+R_{rk}V_{ak}V_{al}R_{lj}R_{is})dZ_{ij}d\bar{Z}_{rs}\\
&+B\sum_{r=1}^n\sum_{j=1}^n\sum_{i=1}^mR_{rj}dW_{ij}d\bar{W}_{ir}\\
&-B\sum_{r=1}\sum_{k=1}\sum_{i\leq j}\sum_{s=1}2^{-\delta(i,j)}V_{rk}(R_{ki}R_{js}+R_{kj}R_{is})dZ_{ij}d\bar{W}_{rs}\\
&-B\sum_{r=1}\sum_{k=1}\sum_{i\leq j}\sum_{s=1}2^{-\delta(i,j)}V_{rk}(R_{ki}R_{js}+R_{kj}R_{is})d\bar{Z}_{ij}dW_{rs}
\end{aligned}
\end{displaymath}

We have the following proposition.
\begin{prop}
The metric above is K\"{a}hler.
\end{prop}

\begin{proof}
This proposition follows from the above expression of the metric and lemma (\ref{lemma}). See also \cite{Berceanu} for more geometric properties of the manifold.

To prove the metric is K\"{a}hler, we have to prove that the closed form $\omega$ associated to the metric is closed.

We first show that the $dZ_{ij}\wedge dZ_{pq}\wedge d\bar{Z}_{rs}$ part of $d\omega$ is 0. If we denote the coefficients of $dZ_{ij}d\bar{Z}_{rs}$ in $ds_{n,m;A,B}^2$ above by $\phi(i,j,r,s)$, then this equals to say
\begin{eqnarray}\label{eq:identity}
\frac{\partial\phi(i,j,r,s)}{\partial Z_{pq}}=\frac{\partial\phi(p,q,r,s)}{\partial Z_{ij}}\end{eqnarray}
Since $\phi(i,j,r,s)$ obviously has two parts: $2^{1-\delta(i,j)-\delta(r,s)} (R_{ir}R_{js}+R_{jr}R_{is})$ and 
\[\sum_{k=1}\sum_{l=1}\sum_{a=1} 2^{-\delta(i,j)-\delta(r,s)}(R_{sk}V_{ak}V_{al}R_{li}R_{jr}
+R_{sk}V_{ak}V_{al}R_{lj}R_{ir}+R_{rk}V_{ak}V_{al}R_{li}R_{js}+R_{rk}V_{ak}V_{al}R_{lj}R_{is}).\]
We first compute the partial derivative of $2^{1-\delta(i,j)-\delta(r,s)} (R_{ir}R_{js}+R_{jr}R_{is})$ with respect to $Z_{pq}$. By lemma (\ref{lemma}), this is
\begin{eqnarray}\nonumber
2^{-\delta(i,j)-\delta(r,s)-\delta(p,q)}\sqrt{-1}(&R_{ip}R_{qr}R_{js}+R_{iq}R_{pr}R_{js}+R_{ir}R_{jp}R_{qs}+R_{ir}R_{jq}R_{ps}\\
&+R_{jp}R_{rq}R_{is}+R_{jq}R_{rp}R_{is}+R_{jr}R_{ip}R_{qs}+R_{jr}R_{iq}R_{ps})\nonumber\end{eqnarray}
By similar calculation, we see that the expression for the partial derivative of $2^{1-\delta(p,q)-\delta(r,s)} (R_{pr}R_{qs}+R_{qr}R_{ps})$ with respect to $Z_{ij}$ is also the formula above. So the first part for (\ref{eq:identity}) holds.

For the other part, we will compute the partial derivative of
\[\sum_{k=1}\sum_{l=1}\sum_{a=1} 2^{-\delta(i,j)-\delta(r,s)}(R_{sk}V_{ak}V_{al}R_{li}R_{jr}
+R_{sk}V_{ak}V_{al}R_{lj}R_{ir}+R_{rk}V_{ak}V_{al}R_{li}R_{js}+R_{rk}V_{ak}V_{al}R_{lj}R_{is})\]
Using Lemma(\ref{lemma}), we see that this is $\sum_{k=1}\sum_{l=1}\sum_{a=1} 2^{-\delta(i,j)-\delta(r,s)-\delta(p,q)-1}\sqrt{-1}V_{ak}V_{al}$
\begin{eqnarray*}
&(R_{sp}R_{aq}R_{ki}R_{jr}+
R_{sq}R_{ap}R_{ki}R_{jr}+R_{sa}R_{kp}R_{qi}R_{jr}+R_{sa}R_{kq}R_{pi}R_{jr}+R_{sa}R_{jp}R_{ki}R_{qr}+R_{sa}R_{jq}R_{ki}R_{pr}\\
&+R_{sp}R_{aq}R_{kj}R_{ir}+R_{sq}R_{ap}R_{kj}R_{ir}+R_{sa}R_{kp}R_{qj}R_{ir}+R_{sa}R_{kq}R_{pj}R_{ir}+R_{sa}R_{ip}R_{kj}R_{qr}+R_{sa}R_{iq}R_{kj}R_{pr}\\
&+R_{rp}R_{aq}R_{ki}R_{js}+
R_{rq}R_{ap}R_{ki}R_{js}+R_{ra}R_{kp}R_{qi}R_{js}+R_{ra}R_{kq}R_{pi}R_{js}+R_{ra}R_{jp}R_{ki}R_{qs}+R_{ra}R_{jq}R_{ki}R_{ps}\\
&+R_{rp}R_{aq}R_{kj}R_{is}+
R_{rq}R_{ap}R_{kj}R_{is}+R_{ra}R_{kp}R_{qj}R_{is}+R_{ra}R_{kq}R_{pj}R_{is}+R_{ra}R_{ip}R_{kj}R_{qs}+R_{ra}R_{iq}R_{kj}R_{ps})
\end{eqnarray*}
Similarly, this also equals the partial derivative of the second part of $\phi(p,q,r,s)$ with respect to $Z_{ij}$.
Thus we see that formula (\ref{eq:identity}) holds and the $dZ_{ij}\wedge dZ_{pq}\wedge d\bar{Z}_{rs}$ part is 0.

Next we prove that the $dZ_{ij}\wedge dW_{pq}\wedge d\bar{W}_{rs}$ part is 0. One of these coefficients comes from the partial derivative of $dW_{pq}\wedge d\bar{W}_{rs}$, and this equals $\delta(p,r)2^{-\delta(i,j)-1}\sqrt{-1}(R_{si}R_{qj}+R_{sj}R_{qi})$.

Others come from the partial derivative of $dZ_{ij}\wedge d\bar{W}_{rs}$ with respect to $W_{pq}$. By computation, this part  equals $-\delta(p,r)2^{-\delta(i,j)-1}\sqrt{-1}(R_{si}R_{qj}+R_{sj}R_{qi})$.
So the $dZ_{ij}\wedge dW_{pq}\wedge d\bar{W}_{rs}$ part is also 0.

Next we consider the $dZ_{ij}\wedge dZ_{pq}\wedge d\bar{W}_{rs}$ part. This equals 0 means that the partial derivative of
$\sum\limits_{k=1}2^{-\delta(i,j)}V_{rk}(R_{ki}R_{js}+R_{kj}R_{is})$ with respect to $Z_{pq}$ equals the partial derivative of
$\sum\limits_{k=1}2^{-\delta(p,q)}V_{rk}(R_{kp}R_{qs}+R_{kq}R_{ps})$ with respect to $Z_{ij}$. By computation, they both equal
\begin{eqnarray*}
\sum\limits_{k=1}2^{-\delta(i,j)-\delta(p,q)-1}\sqrt{-1}V_{rk}(&R_{kp}R_{qi}R_{js}+R_{kq}R_{ip}R_{js}+R_{ki}R_{jp}R_{qs}
+R_{ki}R_{jq}R_{sp}\\
+&R_{kp}R_{qj}R_{is}+R_{kq}R_{jp}R_{is}+R_{kj}R_{ip}R_{qs}
+R_{kj}R_{iq}R_{sp})
\end{eqnarray*}
So this part is also 0.

Next is the $dZ_{ij}\wedge dW_{pq}\wedge d\bar{Z}_{rs}$ part. They come from the partial derivative of the $dZ_{ij}\wedge d\bar{Z}_{rs}$ part and the $ dW_{pq}\wedge d\bar{Z}_{rs}$ part. By the same way of computation, they both equal
\begin{eqnarray*}
2^{-\delta(r,s)-\delta(i,j)-1}\sqrt{-1}\sum\limits_{k}V_{pk}(&R_{ki}R_{jr}R_{sq}+R_{kj}R_{ir}R_{sq}+R_{kr}R_{is}R_{jq}+R_{kr}R_{js}R_{iq}\\
&+R_{ki}R_{js}R_{rq}+R_{kj}R_{is}R_{rq}+R_{ks}R_{ir}R_{jq}+R_{ks}R_{jr}R_{iq})
\end{eqnarray*}

The $dW_{ij}\wedge dW_{pq}\wedge d\bar{W}_{rs}$ part is obviously 0, so we only need to prove the $dW_{ij}\wedge dW_{pq}\wedge d\bar{Z}_{rs}$ part is 0 now, other cases are just conjugations of the proved ones. This part comes from the partial derivative of $dW_{ij}\wedge d\bar{Z}_{rs}$ and $ dW_{pq}\wedge d\bar{Z}_{rs}$, and it is easy to check that this part is also 0.

Combining all these, we have seen that $d\omega=0$, and so the metric is  K\"{a}hler.
\end{proof}

Now the Hermitian-metric matrix associated to this metric is given by
\[W=\left(
     \begin{array}{cc}
        W^1  & W^2 \\
       ^tW^{2}  & W^3 \\
     \end{array}
   \right)\]
where $W^1=(W^1_{I,\bar{J}})_{I,J\in\Omega}$, and
\begin{displaymath}
\begin{aligned}
W^1_{I,\bar{J}}=&A2^{1-\delta(i,j)-\delta(r,s)}\times (R_{ir}R_{js}+R_{jr}R_{is})\\
&+B\sum_{k=1}^n\sum_{l=1}^n\sum_{a=1}^m 2^{-\delta(i,j)-\delta(r,s)}
V_{ak}V_{al}(R_{sk}R_{li}R_{jr}+R_{sk}R_{lj}R_{ir}+R_{rk}R_{li}R_{js}+R_{rk}R_{lj}R_{is}).
\end{aligned}
\end{displaymath}
$W^2=(W^2_{I,\bar{J'}})_{I\in\Omega,J'\in\Omega'}$ \[W^2_{I,\bar{J'}}=-B\sum_{k=1}^n2^{-\delta(i,j)}V_{r'k}(R_{ki}R_{js'}+R_{kj}R_{is'}),\]
$W^3=(W^3_{I',\bar{J'}})_{I',J'\in\Omega'}$, and
\[W^3_{I',\bar{J'}}=B\delta(i',s')R_{r'j'}\]

To compute the connection, we have to know the inverse of $W$. Let $M^1=(M^1_{I,\bar{J}})_{I,J\in\Omega}$, $M^2=(M^2_{I,\bar{J'}})_{I\in\Omega J'\in\Omega'}$ , and $M^3=(M^3_{I',\bar{J'}})_{I',J'\in\Omega'}$, with
\begin{eqnarray*}
  M^1_{I,\bar{J}} &=& \frac{1}{2A}Y_{ir}Y_{js}+\frac{1}{2A}Y_{jr}Y_{is} \\
  M^2_{I,\bar{J'}}&=& \frac{1}{2A}V_{r'i}Y_{js'}+\frac{1}{2A}V_{r'j}Y_{is'} \\
  M^3_{I',\bar{J'}} &=& \sum_{k=1}^n\sum_{l=1}^n\frac{1}{2A}V_{rk}R_{kl}V_{il}Y_{js}+\frac{1}{2A}V_{ri}V_{js}+\frac{1}{B}\delta(i,r)Y_{js}
\end{eqnarray*}

\begin{lemma}
Let $M=\left(
     \begin{array}{cc}
        M^1  & M^2 \\
        ^tM^2 &  M^3 \\
     \end{array}
   \right)$, then $M$ is the inverse matrix of $W$.
\end{lemma}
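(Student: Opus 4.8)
The plan is to avoid expanding the full $\big(2\binom{n+1}{2}+2mn\big)$-dimensional product and instead exploit the repeated block pattern shared by $W$ and $M$. First I would reorder the coordinate differentials from $(dZ,d\overline Z,dW,d\overline W)$ to $(dZ,dW,d\overline Z,d\overline W)$; this amounts to conjugating both matrices by one and the same permutation $P$, which preserves the relation to be proved since $MW=I\iff (PMP^{-1})(PWP^{-1})=I$. Writing
\[H:=\begin{pmatrix} W^1 & W^2 \\ {}^tW^2 & W^3\end{pmatrix},\qquad G:=\begin{pmatrix} M^1 & M^2 \\ {}^tM^2 & M^3\end{pmatrix},\]
a direct inspection of the nonzero blocks shows that after reordering $PWP^{-1}=\left(\begin{smallmatrix}0&H\\H&0\end{smallmatrix}\right)$ and $PMP^{-1}=\left(\begin{smallmatrix}0&G\\G&0\end{smallmatrix}\right)$. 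Consequently $(PMP^{-1})(PWP^{-1})=\left(\begin{smallmatrix}GH&0\\0&GH\end{smallmatrix}\right)$, and the whole lemma reduces to the single identity $GH=I$, i.e.\ to showing that $G$ is the inverse of the ``Hermitian part'' $H$ of the metric.

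Next I would unpack $GH=I$ into its four block equations,
\[M^1W^1+M^2\,{}^tW^2=I_\Omega,\quad M^1W^2+M^2W^3=0,\quad {}^tM^2W^1+M^3\,{}^tW^2=0,\quad {}^tM^2W^2+M^3W^3=I_{\Omega'},\]
and verify each of them entrywise. The only algebraic input is $R=Y^{-1}$, used repeatedly in the form $\sum_k R_{ik}Y_{kl}=\delta(i,l)$. The bookkeeping device I would use for the $\Omega$-indexed identities is to replace each sum over pairs $(r,s)$ with $r\le s$ by a sum over all ordered pairs, absorbing the factor $2^{-\delta(r,s)}$ into the weight; the symmetrized kernels $R_{ir}R_{js}+R_{is}R_{jr}$ then contract against the $Y_{pr}Y_{qs}$-type factors of $M^1$ to produce exactly the Kronecker symbol $\delta_{(i,j),(p,q)}$ on $\Omega$.

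The main obstacle is the two diagonal identities. In $M^1W^1+M^2\,{}^tW^2=I_\Omega$ the matrix $M^1$ carries only the $A$-part, so $M^1W^1$ equals $I_\Omega$ plus a remainder produced entirely by the $B$-part of $W^1$; this remainder is bilinear in the entries of $V$ and of order $B/A$, and it must be cancelled exactly by $M^2\,{}^tW^2$, which is likewise of order $B/A$ and bilinear in $V$. Matching these two $V$-bilinear expressions after the several intermediate contractions with $R$ and $Y$, while keeping the symmetrization factors $2^{-\delta}$ aligned, is the delicate part of the computation. The identity ${}^tM^2W^2+M^3W^3=I_{\Omega'}$ is of the same nature: the $\tfrac{2}{B}\delta(i,r)Y_{js}$ part of $M^3$ paired with the $\tfrac12 B\,\delta(i',s')R_{r'j'}$ part of $W^3$ collapses to $I_{\Omega'}$ via $RY=I$, while the $V$-dependent part of $M^3W^3$ must annihilate ${}^tM^2W^2$. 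The two off-diagonal identities are lighter, since there every surviving term is only linear in $V$, and they follow by the same contraction technique once the diagonal cancellations are understood.
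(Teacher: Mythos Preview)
Your proposal is correct and takes essentially the same route as the paper, which directly verifies the same four block identities $M^1W^1+M^2\,{}^tW^2=I_\Omega$, $M^1W^2+M^2W^3=0$, ${}^tM^2W^1+M^3\,{}^tW^2=0$, ${}^tM^2W^2+M^3W^3=I_{\Omega'}$ by the same unfold-and-contract technique (your permutation step packaging this as $GH=I$ is a tidy preamble the paper omits). One small correction to your difficulty assessment: the third identity is not purely linear in $V$, since the $B$-part of $W^1$ against ${}^tM^2$ and the $V$-quadratic part of $M^3$ against ${}^tW^2$ each produce cubic-in-$V$ terms, and these must cancel one another just as in the paper's step~(3).
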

\begin{proof}
(1) We use the Dirac symbol $\delta(I,J):= \left\{ \begin{array}{ll}
1 & \textrm{if $I=J \in \Omega$}\\
0 & \textrm{if $I\neq J \in \Omega$}
\end{array} \right.$. Recall that $I=(i,j), J=(r,s),$ and $K=(p,q).$ We have
\begin{displaymath}
\begin{aligned}
\sum_{J\in\Omega}M_{I\bar{J}}^1W_{J\bar{K}}^1=&\sum_{r\leqq s}M_{(i,j)\overline{(r,s)}}^1 W_{(r,s)\overline{(p,q)}}^1\\
=&\delta(I,K)+\frac{B}{A}\sum_{r\leqq s}(Y_{ir}Y_{js}+Y_{jr}Y_{is})\biggl(\sum_{k=1}^n\sum_{l=1}^n\sum_{a=1}^m 2^{-1-\delta(p,q)-\delta(r,s)}\times\\
&(R_{qk}V_{ak}V_{al}R_{lr}R_{sp}+R_{qk}V_{ak}V_{al}R_{ls}R_{rp}+R_{pk}V_{ak}V_{al}R_{lr}R_{sq}+R_{pk}V_{ak}V_{al}R_{ls}R_{rq})\biggl)\\
=&\delta(I,K)+\frac{B}{A}\sum_{r=1}^n\sum_{k=1}^n\sum_{l=1}^n\sum_{a=1}^m2^{-\delta(p,q)}\left(Y_{ir}Y_{jr}(R_{qk}V_{ak}V_{al}R_{lr}R_{rp}+R_{pk}V_{ak}V_{al}R_{lr}R_{rq})\right)\\
&+\frac{B}{A}\sum_{r<s}(Y_{ir}Y_{js}+Y_{jr}Y_{is})\biggl(\sum_{k=1}^n\sum_{l=1}^n\sum_{a=1}^m 2^{-1-\delta(p,q)}\times\\
&(R_{qk}V_{ak}V_{al}R_{lr}R_{sp}+R_{qk}V_{ak}V_{al}R_{ls}R_{rp}+R_{pk}V_{ak}V_{al}R_{lr}R_{sq}+R_{pk}V_{ak}V_{al}R_{ls}R_{rq})\biggl)\\
=&\delta(I,K)+\frac{B}{A}\sum_{r=1}^n\sum_{s=1}^n\sum_{k=1}^n\sum_{l=1}^n\sum_{a=1}^m2^{-1-\delta(p,q)}(Y_{ir}Y_{js}+Y_{jr}Y_{is})\\
&\times(R_{qk}V_{ak}V_{al}R_{ls}R_{rp}+R_{pk}V_{ak}V_{al}R_{ls}R_{rq})\\
=&\delta(I,K)+\frac{B}{A}\sum_{k=1}^n\sum_{l=1}^n\sum_{a=1}^m2^{-1-\delta(p,q)}V_{ak}V_{al}\\
&\times\left(\delta(i,p)\delta(j,l)R_{qk}+\delta(i,q)\delta(j,l)R_{pk}+\delta(j,p)\delta(i,l)R_{qk}+\delta(j,q)\delta(i,l)R_{pk}\right)
\end{aligned}
\end{displaymath}
\begin{displaymath}
\begin{aligned}
\sum_{J\in\Omega'}M_{I\bar{J'}}^2W_{K\bar{J'}}^2=&\sum_{r'=1}^m\sum_{s'=1}^nM_{(i,j)\overline{(r',s')}}^2W_{(p,q)\overline{(r',s')}}^2\\
=&-\frac{B}{A}\sum_{r'=1}^m\sum_{s'=1}^n(V_{r'i}Y_{js'}+V_{r'j}Y_{is'})\left(\sum_{k=1}^n2^{-1-\delta(i,j)}V_{r'k}(R_{ki}R_{js'}+R_{kj}R_{is'})\right)\\
=&-\frac{B}{A}\sum_{r'=1}^m\sum_{k=1}^n2^{-1-\delta(i,j)}\biggl(V_{r'i}V_{r'k}(\delta(j,q)R_{pk}+\delta(j,p)R_{qk})\\
&+V_{r'j}V_{r'k}(\delta(i,p)R_{qk}+\delta(i,q)R_{pk})\biggl)
\end{aligned}
\end{displaymath}
Adding these equalities together, we can see that
 \begin{equation}
 \sum_{J\in\Omega}M_{I\bar{J}}^1W_{J\bar{K}}^1+\sum_{J'\in\Omega'}M_{I\bar{J'}}^2W_{K\bar{J'}}^2=\delta(I,K)\end{equation}
(2)\ We will now compute $\sum_{J\in\Omega}M_{I\bar{J}}^1W_{\bar{J}K'}^2+\sum_{J'\in\Omega'}M_{I\bar{J'}}^2W_{J'\bar{K'}}^3.$
First we have
\begin{displaymath}
\begin{aligned}
\sum_{J\in\Omega}M_{I\bar{J}}^1W_{J\bar{K'}}^2=&\sum_{r\leq s}M_{(i,j)\bar{(r,s)}}^1W_{(r,s)\bar{(p',q')}}^2\\
=&\frac{1}{A}\sum_{r\leq s}(Y_{ir}Y_{js}+Y_{jr}Y_{is})\left(-B\sum_{k=1}^n2^{-1-\delta(r,s)}V_{p'k}(R_{kr}R_{sq'}+R_{sk}R_{rq'})\right)\\
=&-\frac{B}{2A}\sum_{k=1}^nV_{p'k}\left(\delta(i,k)\delta(j,q')+\delta(j,k)\delta(i,q')\right)\\
=&-\frac{B}{2A}V_{p'i}\delta(j,q')+2^{-1}V_{p'j}\delta(i,q'),
\end{aligned}
\end{displaymath}
and
\begin{displaymath}
\begin{aligned}
\sum_{J'\in\Omega'}M_{I\bar{J'}}^2W_{J'\bar{K'}}^3=&\sum_{r=1}^m\sum_{s=1}^nM_{(i,j)\overline{(r',s')}}^2W_{(r',s')\overline{(p',q')}}^3\\
=&\frac{B}{2A}\sum_{r=1}^m\sum_{s=1}^n(V_{r'i}Y_{js'}+V_{r'j}Y_{is'})\delta(r',p')R_{s'q'}\\
=&\frac{B}{2A}\left(V_{p'i}\delta(j,q')+V_{p'j}\delta(i,q')\right).
\end{aligned}
\end{displaymath}
So we have
\begin{displaymath}
\sum_{J\in\Omega}M_{I\bar{J}}^1W_{J\bar{K'}}^2+\sum_{J'\in\Omega'}M_{I\bar{J'}}^2W_{J'\bar{K'}}^3=0.
\end{displaymath}
(3)\ We compute $\sum_{J\in\Omega}M_{K'\bar{J}}^2W_{J\bar{I}}^1+\sum_{J'\in\Omega'}M_{K'\bar{J'}}^3W_{J'\bar{I}}^2$, which is more complicated. First we have
\begin{displaymath}
\begin{aligned}
\sum_{J\in\Omega}M_{K'\bar{J}}^2W_{J\bar{I}}^1=&\sum_{r\leq s}M_{(p',q')\overline{(r,s)}}^2W_{(r,s)\overline{(i,j)}}^1\\
=&\frac{1}{A}\sum_{r\leq s}(V_{p'r}Y_{sq'}+V_{p's}Y_{rq'})A2^{-\delta(i,j)-\delta(r,s)}\times (R_{ir}R{js}+R_{jr}R_{is})\\
&+\sum_{r\leq s}(V_{p'r}Y_{sq'}+V_{p's}Y_{rq'})B\sum_{k=1}^n\sum_{l=1}^n\sum_{a=1}^m 2^{-1-\delta(i,j)-\delta(r,s)}\times\\
&(R_{sk}V_{ak}V_{al}R_{li}R_{jr}+R_{sk}V_{ak}V_{al}R_{lj}R_{ir}+R_{rk}V_{ak}V_{al}R_{li}R_{js}+R_{rk}V_{ak}V_{al}R_{lj}R_{is})\\
=&\sum_{r=1}^n\sum_{s=1}^n2^{-\delta(i,j)}(V_{p'r}Y_{sq'}+V_{p's}Y_{rq'})R_{ri}R_{sj}\\
&+\frac{B}{A}\sum_{r=1}^n\sum_{s=1}^n(V_{p'r}Y_{sq'}+V_{p's}Y_{rq'})\sum_{k=1}^n\sum_{l=1}^n\sum_{a=1}^m 2^{-1-\delta(i,j)}
(R_{sk}V_{ak}V_{al}R_{li}R_{jr}+R_{sk}V_{ak}V_{al}R_{lj}R_{ir})\\
=&2^{-\delta(i,j)}\left(\sum_{r=1}^nV_{p'r}R_{ri}\delta(q',j)+\sum_{s=1}^nV_{p's}R_{sj}\delta(q',i)\right)+\frac{B}{A}2^{-1-\delta(i,j)-\delta(r,s)}\sum_{k=1}^n\sum_{l=1}^n\sum_{a=1}^mV_{ak}V_{al}\\
&\left(\sum_{r=1}^n(V_{p'r}R_{rj}R_{il}\delta(q',k)+V_{p'r}R_{ri}R_{jl}\delta(q',k))+\sum_{s=1}^n(V_{p's}R_{sk}R_{il}\delta(q',j)
+V_{p's}R_{sk}R_{jl}\delta(q',i))\right),
\end{aligned}
\end{displaymath}
and similarly
\begin{displaymath}
\begin{aligned}
\sum_{J'\in\Omega'}M_{K'\bar{J'}}^3W_{J'\bar{I}}^2=&\sum_{r'=1}^m\sum_{s'=1}^nM_{(p',q')\overline{(r',s')}}^3W_{(r',s')\overline{(i,j)}}^2\\
=&\sum_{r'=1}^m\sum_{s'=1}^n\left(\frac{1}{A}\sum_{k=1}^n\sum_{l=1}^nV_{r'k}R_{kl}V_{p'l}Y_{q's'}+\frac{1}{A}V_{r'p'}V_{q's'}+\frac{2}{B}\delta(p',r')Y_{q's'}\right)\\
&\left(-B\sum_{k=1}^n2^{-1-\delta(i,j)}V_{r'k}(R_{ki}R_{js'}+R_{kj}R_{is'})\right)\\
=&-\frac{B}{A}2^{-1-\delta(i,j)}\sum_{r'=1}^m\sum_{k=1}^n\sum_{l=1}^n\sum_{t=1}^n\left(V_{r'k}R_{kl}V_{p'l}V_{r't}(R_{ti}\delta(q',j)+R_{tj}\delta(q',i))\right)\\
&-\frac{B}{A}2^{-1-\delta(i,j)}\sum_{r'=1}^m\sum_{s'=1}^n\sum_{t=1}^nV_{r'p'}V_{q's'}V_{r't}(R_{ti}R_{js'}+R_{tj}R_{is'})\\
&-2^{-\delta(i,j)}\sum_{t=1}^nV_{p't}\left(R_{ti}\delta(q',j)+R_{tj}\delta(q',i)\right).
\end{aligned}
\end{displaymath}
Now it is easily checked that
\begin{equation}\sum_{J\in\Omega}M_{K'\bar{J}}^2W_{J\bar{I}}^1+\sum_{J'\in\Omega'}M_{K'\bar{J'}}^3W_{J'\bar{I}}^2=0
\end{equation}
(4)At last we calculate $\sum_{J\in\Omega}M_{J\bar{I'}}^2W_{J\bar{K'}}^2+\sum_{J'\in\Omega'}M_{I'\bar{J'}}^3W_{J'\bar{K'}}^3.$ First
\begin{displaymath}
\begin{aligned}
\sum_{J\in\Omega}M_{J\bar{I'}}^2W_{J\bar{K'}}^2=&\sum_{r\leq s}M_{(r,s)\overline{(i',j')}}^2W_{(r,s)\overline{(p',q')}}^2\\
=&-\frac{B}{A}\sum_{r\leq s}(V_{i'r}Y_{sj'}+V_{i's}Y_{rj'})\sum_{k=1}^n2^{-1-\delta(r,s)}V_{p'k}(R_{kr}R_{sq'}+R_{ks}R_{rp'})\\
=&-\frac{B}{A}2^{-1-\delta(r,s)}\sum_{r=1}^n\sum_{s=1}^n\sum_{k=1}^nV_{p'k}(V_{i'r}Y_{sj'}+V_{i's}Y_{rj'})R_{kr}R_{sq'}\\
=&-\frac{B}{2A}\sum_{l=1}^n\sum_{k=1}^n\left(V_{p'k}V_{i'l}R_{lk}\delta(q',j')+V_{p'k}V_{i'l}R_{lq'}\delta(k,j')\right),
\end{aligned}
\end{displaymath}
and
\begin{displaymath}
\begin{aligned}
\sum_{J'\in\Omega'}M_{I'\bar{J'}}^3W_{J'\bar{K'}}^3=&\sum_{r', s,}M_{(r',s')\bar{(i',j')}}^3W_{(r',s')\bar{(p',q')}}^3\\
=&\frac{B}{2}\sum_{r'=1}^m\sum_{s'=1}^n\biggl(\frac{1}{A}\sum_{k=1}^n\sum_{l=1}^nV_{r'k}R_{kl}V_{i'l}Y_{j's'}\\&+\frac{1}{A}V_{r'i'}V_{j's'}
+\frac{2}{B}\delta(i',r')Y_{j's'}\biggl)\delta(i',s')R_{r'j'}\\
=&\frac{B}{2A}\left(\sum_{k=1}^n\sum_{l=1}^nV_{p'k}V_{i'l}R_{kl}\delta(q',j')+\sum_{s'=1}^nV_{p'j'}V_{i's'}R_{s'q'}\right)+\delta(q',j')\delta(i',p').
\end{aligned}
\end{displaymath}
So
\begin{equation}\sum_{J\in\Omega}M_{J\bar{I'}}^2W_{J\bar{K'}}^2+\sum_{J'\in\Omega'}M_{I'\bar{J'}}^3W_{J'\bar{K'}}^3=\delta(I',K')
\end{equation}

Thus we have shown that $WM=I$.

\end{proof}
We will only prove the expression for $D(dZ)$ in Theorem \ref{theorem:connection}, the other part is the same.
Similarly to section 3.2 in \cite{Yang Yin}, we can compute $\Gamma_{I,J}^K$ as
\[\Gamma_{I,J}^K=\Gamma_{J,I}^K=-\sum_{L\in\Omega}W_{I\bar{L}}^1\frac{\partial M_{K\bar{L}}^1}{\partial Z_{J}}-\sum_{L'\in\Omega'}W_{I\bar{L'}}^2\frac{\partial M_{K\bar{L'}}^2}{\partial Z_{J}}\]
where $\Gamma_{J,I}^K$ means the Christoffel symbol  corresponding to the indices $J,I,K$.

Before the calculation, we define
\begin{displaymath}
\sigma_{(p,a)(r,s)}= \left\{ \begin{array}{ll}
1 & \textrm{if $Z_{pa}=Z_{rs}$}\\
0 & \textrm{if $Z_{pa}\neq Z_{rs}$}
\end{array} \right.
\end{displaymath}
Then now we have
\begin{displaymath}
\begin{aligned}
\sum_{L\in\Omega}W_{I\bar{L}}^1\frac{\partial M_{K\bar{L}}^1}{\partial Z_{J}}=&-\frac{\sqrt{-1}}{2}\sum_{a\leq b}A2^{-\delta(i,j)-\delta(a,b)}\times (R_{ia}R_{jb}+R_{ja}R_{ib})\\
&\times\frac{1}{A}\left(\sigma_{(p,a)(r,s)}Y_{qb}+\sigma_{(q,b)(r,s)}Y_{pa}+\sigma_{(q,a)(r,s)}Y_{pb}+\sigma_{(p,b)(r,s)}Y_{qa}\right)\\
&-\frac{\sqrt{-1}}{2}\sum_{a\leq b}B\sum_{k=1}^m\sum_{l=1}^n\sum_{t=1}^n 2^{-1-\delta(i,j)-\delta(a,b)}\times V_{kt}V_{kl}\\
&(R_{bl}R_{ia}R_{jt}+R_{ja}R_{it}R_{bl}+R_{ib}R_{jt}R_{al}+R_{jb}R_{it}R_{al})\times\\
&\frac{1}{A}\left(\sigma_{(p,a)(r,s)}Y_{qb}+\sigma_{(q,b)(r,s)}Y_{pa}+\sigma_{(q,a)(r,s)}Y_{pb}+\sigma_{(p,b)(r,s)}Y_{qa}\right)\\
=&-\frac{\sqrt{-1}}{2}\sum_{a=1}^n\sum_{b=1}^n2^{-\delta(i,j)}R_{ia}R_{jb}\left(Y_{qb}\sigma_{(p,a)(r,s)}+Y_{pa}\sigma_{(q,b)(r,s)}+Y_{pb}\sigma_{(q,a)(r,s)}+Y_{qa}\sigma_{(p,b)(r,s)}\right)\\
&-\frac{\sqrt{-1}}{2}\sum_{a=1}^n\sum_{b=1}^n\frac{B}{A}2^{-1-\delta(i,j)}\sum_{k=1}^m\sum_{l=1}^n\sum_{t=1}^nV_{kt}V_{kl}\biggl((R_{ia}R_{jt}+R_{ja}R_{it})\delta(q,l)\sigma_{(p,a)(r,s)}\\
&+(R_{it}R_{bl}+R_{jt}R_{bl})\delta(i,q)\sigma_{(p,b)(r,s)}\\
&+(R_{ia}R_{jt}+R_{ja}R_{it})\delta(p,l)\sigma_{(q,a)(r,s)}
+(R_{it}R_{bl}+R_{jt}R_{bl})\delta(i,p)\sigma_{(q,b)(r,s)}\biggl)\\
=&-\frac{\sqrt{-1}}{2}\sum_{a=1}^n2^{-\delta(i,j)}R_{ia}\bigg(\delta(q,j)\sigma_{(p,a)(r,s)}+\delta(p,j)\sigma_{(q,a)(r,s)})\\
&+R_{ja}(\delta(p,i)\sigma_{(q,a)(r,s)}+\delta(q,i)\sigma_{(p,a)(r,s)}\bigg)\\
&+\frac{\sqrt{-1}}{2}\sum_{a=1}^n\sum_{t=1}^n\sum_{k=1}^m\frac{B}{A}2^{-1-\delta(i,j)}V_{kt}(R_{ia}R_{jt}+R_{ja}R_{it})(V_{kq}\sigma_{(p,a)(r,s)}+V_{kp}\sigma_{(q,a)(r,s)})\\
&-\frac{\sqrt{-1}}{2}\sum_{b=1}^n\sum_{l=1}^n\sum_{t=1}^n\sum_{k=1}^m\frac{B}{A}2^{-1-\delta(i,j)}\biggl(V_{kt}V_{kl}(R_{it}R_{bl}\delta(j,q)+R_{jt}R_{bl}\delta(i,q))\sigma_{(p,b)(r,s)}\\
&+V_{kt}V_{kl}(R_{it}R_{bl}\delta(j,p)+R_{jt}R_{bl}\delta(i,p))\sigma_{(q,b)(r,s)}\biggl )
\end{aligned}
\end{displaymath}

\begin{displaymath}
\begin{aligned}
\sum_{L'\in\Omega'}W_{I\bar{L'}}^2\frac{\partial M_{K\bar{L'}}^2}{\partial Z_{J}}&=\frac{\sqrt{-1}}{2}\frac{B}{A}\sum_{a'=1}^m\sum_{b'=1}^n\sum_{k=1}^n2^{-1-\delta(i,j)}V_{a'k}(R_{ik}R_{jb'}+R_{jk}R_{ib'})\left(\sigma_{(q,b')(r,s)}V_{a'p}+
\sigma_{(p,b')(r,s)}V_{a'q}\right)\\
&=\frac{\sqrt{-1}}{2}\frac{B}{A}\sum_{k=1}^m\sum_{a=1}^n\sum_{t=1}^n 2^{-1-\delta(i,j)}V_{kt}(R_{it}R_{ja}+R_{ja}R_{it})\left(V_{kp}\sigma_{(q,a)(r,s)}+V_{kq}\sigma_{(p,a)(r,s)}\right)
\end{aligned}
\end{displaymath}

So adding this two parts,
\begin{displaymath}
\begin{aligned}
\Gamma_{JI}^K=&-\frac{\sqrt{-1}}{2}\sum_{a=1}^n2^{-\delta(i,j)}\left(R_{ia}(\delta(q,j)\sigma_{(p,a)(r,s)}+\delta(p,j)\sigma_{(q,a)(r,s)})
+R_{ja}(\delta(p,i)\sigma_{(q,a)(r,s)}+\delta(q,i)\sigma_{(p,a)(r,s)})\right)\\
&-\frac{\sqrt{-1}}{2A}\sum_{b=1}^n\sum_{l=1}^n\sum_{t=1}^n\sum_{k=1}^mB2^{-1-\delta(i,j)}\biggl(V_{kt}V_{kl}((R_{it}R_{bl}\delta(j,q)+R_{jt}R_{bl}\delta(i,q))\sigma_{(p,b)(r,s)})\\
&+V_{kt}V_{kl}\Big((R_{it}R_{bl}\delta(j,p)+R_{jt}R_{bl}\delta(i,p)\Big)\sigma_{(q,b)(r,s)}\biggl )
\end{aligned}
\end{displaymath}

If $p=q$, then $\Gamma_{I,J}^K\neq0$ only when $Z_{ij}$ and $Z_{rs}$ belong to the same row or column with $Z_{pp}$.

If $i=r=p$, $\Gamma_{IJ}^K=-\sqrt{-1}R_{js}$-$\frac{\sqrt{-1}B}{2A}\sum_{k=1}^m\sum_{t=1}^n\sum_{l=1}^nV_{kt}V_{kl}R_{jt}R_{sl}$;

if $i=s=p$, $\Gamma_{IJ}^K=-\sqrt{-1}R_{jr}$-$\frac{\sqrt{-1}B}{2A}\sum_{k=1}^m\sum_{t=1}^n\sum_{l=1}^nV_{kt}V_{kl}R_{jt}R_{rl}$;

If $j=r=p$, $\Gamma_{IJ}^K=-\sqrt{-1}R_{is}$-$\frac{\sqrt{-1}B}{2A}\sum_{k=1}^m\sum_{t=1}^n\sum_{l=1}^nV_{kt}V_{kl}R_{it}R_{sl}$;

If $j=s=p$, $\Gamma_{IJ}^K=-\sqrt{-1}R_{ir}$-$\frac{\sqrt{-1}B}{2A}\sum_{k=1}^m\sum_{t=1}^n\sum_{l=1}^nV_{kt}V_{kl}R_{it}R_{rl}$;

If $p<q$, we only consider the case when $Z_{ij}$ belongs to the same row with $Z_{pq}$, $Z_{rs}$ belongs to the same column with $Z_{pq}$, other cases are the same.

Consider the four cases, $i=p= j, r=s=q$; $i=p=j,r<s=q$; $i=p<j, r=s=q$; $i=p=r, j=s=q$ respectively, it is not hard to see that in each of the cases,
we always have $$\Gamma_{IJ}^K=-\frac{\sqrt{-1}}{2}R_{jr}-\frac{\sqrt{-1}B}{4A}\sum_{k=1}^m\sum_{t=1}^n\sum_{l=1}^nV_{kt}V_{kl}R_{jt}R_{rl}.$$
If $i=p<j, r<s=q, i\neq r,$ or $j\neq q$, we have $$\Gamma_{IJ}^K=-\sqrt{-1}R_{jr}-\frac{\sqrt{-1}B}{2A}\sum_{k=1}^m\sum_{t=1}^n\sum_{l=1}^nV_{kt}V_{kl}R_{jt}R_{rl}.$$
The other cases are similar.
Then since we can compute $D(dZ)$ as
\begin{displaymath}
D(dZ_K)=-\sum_{I,J\in\Omega}\Gamma_{I,J}^KdZ_IdZ_J-\sum_{I'\in\Omega',J\in\Omega}\Gamma_{I',J}^KdZ_I'dZ_J-\sum_{I\in\Omega,J'\in\Omega'}\Gamma_{I,J'}^KdZ_IdZ_J'-\sum_{I',J'\in\Omega'}\Gamma_{I',J'}^KdZ_I'dZ_J',\\
\end{displaymath}
we can easily deduce that the $dZdZ$ part in $D(dZ)$ can be written as
$$dZ\left(\sqrt{-1}Y^{-1}+ \frac{\sqrt{-1}B}{2A}Y^{-1}V^tVY^{-1}\right)dZ.$$
The other parts can be got in the similar way and finally we can check that
$$D(dZ)=-\frac{\sqrt{-1}B}{2A}(dZ,dW^t)\left(
     \begin{array}{cc}
      -2\frac{A}{B}Y^{-1}- Y^{-1}V^tVY^{-1} & Y^{-1}V^t \\
       VY^{-1} & -I\\
     \end{array}
   \right)\left(
     \begin{array}{c}
       dZ  \\
       dW \\
     \end{array}
   \right)$$
Thus we have completed the theorem.

\end{document}